\documentclass[11pt]{amsart}

\usepackage{amssymb, amsthm, amsmath, gensymb}
\usepackage{graphicx, comment, accents}
\usepackage[small]{caption}
\usepackage{subcaption}
\usepackage{epsfig}
\usepackage{tikz, pgfplots, float}

\newcommand{\later}[1]{}
\newcommand{\old}[1]{}

\usepackage{amsfonts}

\usepackage[utf8]{inputenc}
\usepackage{fullpage}
\usepackage{framed}
\usepackage{multirow}
\usepackage{enumerate}
\usepackage{url}
\usepackage[breaklinks]{hyperref}
\usepackage{cleveref}
\hypersetup{
	colorlinks = true, 
	urlcolor = cyan, 
	linkcolor = teal, 
	citecolor = cyan 
}

\newcommand{\LA}{\mathop{}\!\mathrm{La^*}}
\newcommand{\LAc}{\mathop{}\!\mathrm{La^*_{con}}}
\newcommand{\LAw}{\mathop{}\!\mathrm{La}}
\newcommand{\LAwc}{\mathop{}\!\mathrm{La_{con}}}

\newcommand{\ar}{\mathop{}\!\mathrm{ar^*}}
\newcommand{\arw}{\mathop{}\!\mathrm{ar}}
\newtheorem{thm}{Theorem}[section]

\newtheorem{definition}[thm]{Definition}
\newtheorem{lem}[thm]{Lemma}

\newtheorem{claim}[thm]{Claim}

\newtheorem{prop}[thm]{Proposition}
\theoremstyle{definition}

\newtheorem{prob}[thm]{Problem}

\newtheorem{obs}[thm]{Observation}

\newcommand{\bC}{{\mathbf C}}
\newcommand{\cC}{{\mathcal C}}
\newcommand{\cQ}{{\mathcal Q}}
\newcommand{\cL}{{\mathcal L}}
\newcommand{\cP}{{\mathcal P}}
\newcommand{\cF}{{\mathcal F}}

\newcommand{\cG}{{\mathcal G}}
\newcommand{\cD}{\boldsymbol{\mathcal D}}

\newcommand{\cS}{{\mathcal S}}

\newcommand{\cU}{{\mathcal U}}

\title{Anti-Ramssey forbidden poset problems}

\author{ Bal\'azs Patk\'os}
\address{HUN-REN Alfr\'ed R\'enyi Institute of Mathematics} 
\email{patkos@renyi.hu}
\date{}

\begin{document}

\maketitle
\begin{abstract}
    A family $\cG$ of sets is a weak copy of a poset $P$ if there is a bijection $f:P\rightarrow \cG$ such that $p\leqslant q$ implies $f(p)\subseteq f(q)$. If $f$ satisfies $p\leqslant q$ if and only if $f(p)\subseteq f(q)$, then $\cG$ is a strong copy of $P$. We study the anti-Ramsey numbers $\arw(n,P), \ar(n,P)$, the maximum number of colors used in a coloring of $2^{[n]}$ that does not admit a rainbow weak or strong copy of $P$, respectively. We establish connections to the well-studied extremal numbers $\LAw(n,P)$ and $\LA(n,P)$ and determine asymptotically $\ar(n,T)$ for all tree posets $T$ and $\ar(n,O_{2k})$ for all crown posets $O_{2k}$.
\end{abstract}

\begin{flushright}
    Dedicated to Gyula O.H. Katona on his 85th birthday.
\end{flushright}

\section{Introduction}

In extremal combinatorics, Tur\'an type problems ask for determining the maximum possible size of a combinatorial object that does not contain a fixed forbidden object. An object is \textit{rainbow} with respect to a coloring if all its elements receive distinct colors. Rainbow Tur\'an problems (introduced for graphs by Keevash, Mubayi, Sudakov, and Verstraete \cite{KMSV}) ask for determining the maximum possible size of an object that does not admit rainbow forbidden objects with respect to any 'proper' coloring. Anti-Ramsey problems (introduced for graphs by Erd\H os, Simonovits, and S\'os \cite{ESS}) constitute another variant of rainbow problems. They ask to maximize the number of colors used when coloring a big object under the condition that no rainbow forbidden small object appears.

We will be interested in the latter type of problems in the area of forbidden subposets. A family $\cG$ of sets is a \textit{weak copy} of a poset $(P,\leqslant)$, if there exists a bijection $f:P\rightarrow \cG$ such that $p\leqslant q$ implies $f(p)\subseteq f(q)$, and $\cG$ is a strong copy of $(P,\leqslant)$, if there exists a bijection $f:P\rightarrow \cG$ such that $p\leqslant q$ if and only if $f(p)\subseteq f(q)$. With some abuse of notation, we will write $P$ instead of $(P,\leqslant)$ and say that a family $\cF$ is \textit{weak (strong) $P$-free} if it does not contain weak (strong) copies of $P$. The \textit{forbidden subposet problem} of determining $\LAw(n,P)$ $(\LA(n,P))$, the maximum size of a weak (strong) $P$-free family $\cF\subseteq 2^{[n]}:=\{S:S\subseteq \{1,2,\dots,n\}\}$ was introduced by Katona and Tarj\'an \cite{KT} in the early 80s. For results and open problems, see the survey \cite{GLsurv} and Chapter 7 of \cite{GP}.

Rainbow Tur\'an forbidden problems were introduced and studied in \cite{Prain}, and another rainbow variant, Gallai-Ramsey numbers of poset pairs, was studied in \cite{Ketal}. In this paper, we start investigating the anti-Ramsey version of the forbidden subposet problem. Let $\arw(n,P)$ ($\ar(n,P)$) denote the maximum number of colors used in a coloring of $2^{[n]}$ that does not admit a rainbow weak (strong) copy of $P$. By definition, we immediately obtain the inequalities \begin{equation}\label{triv}
    \arw(n,P)\le \LAw(n,P) \hskip 1truecm \text{and} \hskip 1truecm \ar(n,P)\le \LA(n,P),
\end{equation} as if a coloring of $2^{[n]}$ uses more than $\LAw(n,P)$ ($\LA(n,P)$) colors, then the family we get by picking an arbitrary set from each color class contains a weak (strong) copy of $P$ which is rainbow. The first inequality of (\ref{triv}) and $\LAw(n,C_|P|)\le (|P|-1)\binom{n}{\lfloor n/2\rfloor}$, an old result of Erd\H os \cite{E} imply $\arw(n,P)=O(\binom{n}{\lfloor n/2\rfloor})$ and the second inequality of (\ref{triv}) together with a result of Methuku and P\'alv\"olgyi \cite{MP} imply $\ar(n,P)=O(\binom{n}{\lfloor n/2\rfloor})$ 

\bigskip

Let us make further quick observations that improve the inequalities of (\ref{triv}) and that give lower bounds on $\arw(n,P)$ and $\ar(n,P)$ and still connect the anti-Ramsey number to the original forbidden subposet problem. We need two concepts.
A family $\cF$ is said to be \textit{convex} if $F\subseteq G\subseteq F'$ and $F,F'\in \cF$ imply $G\in \cF$. Let $\LAwc(n,P)$  ($\LAc(n,P)$) denote the maximum size of a weak (strong) $P$-free convex family $\cF\subseteq 2^{[n]}$.

For any poset $P$, we write $\cP^-(P)$ to denote the set $\{P\setminus \{m\}: m ~\text{is maximal or minimal in}\ P\}$. Also, for a set $\cP$ of posets, we say that the family $\cF$ is weak (strong) $\cP$-free if it is weak (strong) $P$-free for all $P\in \cP$. $\LAw(n,\cP)$ (respectively $\LA(n,\cP))$ denotes the maximum size of a weak (strong) $\cP$-free family $\cF\subseteq 2^{[n]}$.

\begin{prop}\label{lowertriv}
    For any poset $P$, we have
    \begin{enumerate}
        \item 
        $1+\LAwc(n,\cP^-(P))\le \arw(n,P)$, 
        \item 
        $1+\LAc(n,\cP^-(P))\le \ar(n,P)$.
    \end{enumerate}
\end{prop}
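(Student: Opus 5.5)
Take a convex family $\cF\subseteq 2^{[n]}$ of maximum size that is weak (respectively strong) $\cP^-(P)$-free. Colour every set of $\cF$ with its own colour, and give all sets of $2^{[n]}\setminus\cF$ one extra common colour. This colouring uses $|\cF|+1$ colours, provided $\cF\ne 2^{[n]}$. If $\cF=2^{[n]}$, then $2^{[n]}$ contains no copy of any member of $\cP^-(P)$, so $P$ has at most one element for large $n$ and the statement is trivial or degenerate; I will treat this boundary case separately or note that it does not occur. It remains to show that the colouring has no rainbow weak (strong) copy of $P$.

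Suppose $\cG$ is a rainbow copy of $P$ with bijection $f:P\to\cG$. Since all sets outside $\cF$ share one colour, at most one member of $\cG$ lies outside $\cF$. If none does, then $\cG\subseteq\cF$, and deleting the image of any maximal element $m$ of $P$ gives a copy of $P\setminus\{m\}\in\cP^-(P)$ inside $\cF$. For a weak copy the restriction of $f$ is still order-preserving; for a strong copy the restriction still satisfies ``if and only if''. Either way this contradicts the choice of $\cF$.

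So exactly one element $p\in P$ has $G=f(p)\notin\cF$. If $p$ is maximal or minimal in $P$, then $f$ restricted to $P\setminus\{p\}$ is a copy of a member of $\cP^-(P)$ inside $\cF$, again a contradiction. Otherwise $p$ is neither maximal nor minimal, so there are $q<p<r$ in $P$. Then $f(q)\subseteq G\subseteq f(r)$ with $f(q),f(r)\in\cF$, and convexity of $\cF$ forces $G\in\cF$, a contradiction.

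The argument is the same for weak and strong copies, since convexity only uses the inclusions $f(q)\subseteq f(p)\subseteq f(r)$, which hold in both settings. The main point needing care is this middle case, which is exactly why the definition requires convexity. The only other thing to check is the degenerate situation where $\cF$ is the whole cube, so that the extra colour is genuinely used.
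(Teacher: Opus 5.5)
Your argument is correct and is the paper's proof: colour a maximum convex $\cP^-(P)$-free family $\cF$ injectively, give $2^{[n]}\setminus\cF$ one common colour, and use convexity to force the unique set of a rainbow copy outside $\cF$ (if there is one) to be the image of a maximal or minimal element, which leaves a copy of a member of $\cP^-(P)$ inside $\cF$. You make explicit two points the paper leaves implicit, the case $\cG\subseteq\cF$ and whether the extra colour is actually used; for the latter, $\cF\neq 2^{[n]}$ holds automatically once $n\ge |P|-1$, because $x\mapsto\cD_Q(x)$ embeds every poset $Q$ on $|P|-1$ elements strongly into $2^{[n]}$ (for tiny $n$ the inequality can genuinely fail, e.g.\ $n=1$, $P=A_3$ in the strong setting).
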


\begin{proof}
    Let $\cF\subseteq 2^{[n]}$ be a convex weak (strong) $\cP^-(P)$-free family of size $\LAwc(n,\cP^-(P))$ (respecitvely $\LAc(n,\cP^-(P))$). Color each $F\in \cF$ with distinct colors and use the same new color for all sets in $2^{[n]}\setminus \cF$. By the convexity of $\cF$, sets of the new color can only be maximal or minimal elements in a weak (strong) rainbow copy of $P$, so the remaining sets of the copy should form a weak (strong) copy of a poset in $\cP^-(P)$ within $\cF$. 
\end{proof}

As a corollary, we immediately obtain the following. Let $C_k$ denote the chain (totally ordered set) on $k$ elements. Then if $P$ contains a weak copy of $2C_2$ (two unrelated copies of $C_2$) or a copy of $C_3$, then $\ar(n,P),\arw(n,P)=\Theta(\binom{n}{\lfloor n/2\rfloor})$. Indeed, then any $P'\in \cP^-(P)$ contains a pair of elements in relation, so both $\LAc(n,\cP^-(P))$ and $\LAwc(n,\cP^-(P))$ are at least $\binom{n}{\lfloor n/2\rfloor}$ as shown by the convex family of the middle layer of $2^{[n]}$.

\begin{prop}\label{uppertriv} \
    \begin{enumerate}
        \item 
        For any poset $P$, we have $\arw(n,P)\le 2+\LAw(n,\cP^-(P))$.
        \item 
        If a poset $P$ contains a smallest or largest element $m$, then $\ar(n,P)\le 1+\LA(n,P\setminus \{m\})$.
    \end{enumerate}
\end{prop}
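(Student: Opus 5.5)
The plan is to take a coloring with many colors, pick one set from each color class cleverly, and find inside the resulting rainbow family a copy of a smaller poset that extends to a rainbow copy of $P$.

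\emph{Part (1).} Fix a coloring of $2^{[n]}$ using $c\ge 3+\LAw(n,\cP^-(P))$ colors. We may assume $\emptyset$ and $[n]$ receive different colors; if they share a color, the argument below still works after discarding that single color. Remove the colors of $\emptyset$ and $[n]$. From each of the remaining $c-2\ge 1+\LAw(n,\cP^-(P))$ color classes pick one representative set. Call the resulting family $\cF$. Every set of $\cF$ is different from $\emptyset$ and $[n]$, and $\cF$ is rainbow.

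Since $|\cF|>\LAw(n,\cP^-(P))$, the family $\cF$ contains a weak copy of some $P'\in\cP^-(P)$. Say $P'=P\setminus\{m\}$. If $m$ is minimal in $P$, map $m$ to $\emptyset$; if $m$ is maximal, map $m$ to $[n]$. The added set is comparable in the required direction with every set, so the weak-copy condition holds for all relations involving $m$. The added set is distinct from the sets of $\cF$ and has a color not used in $\cF$. This gives a rainbow weak copy of $P$. Hence $\arw(n,P)\le 2+\LAw(n,\cP^-(P))$.

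The only subtlety is the count of colors. We reserve the two colors of $\emptyset$ and $[n]$ (at most two), which is exactly where the additive constant $2$ comes from. If $\emptyset$ and $[n]$ share a color, we lose only one color, and the count is even better.

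\emph{Part (2).} By symmetry (complementation) assume $m$ is a smallest element of $P$. Take a coloring with at least $2+\LA(n,P\setminus\{m\})$ colors. Drop the color of $\emptyset$ and pick representatives of the other classes. This gives a rainbow family $\cF$ avoiding $\emptyset$ with $|\cF|>\LA(n,P\setminus\{m\})$. So $\cF$ contains a strong copy of $P\setminus\{m\}$, and we add $\emptyset$ as the image of $m$.

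We must check that this is a strong copy. Since $m$ is the smallest element, $m\le q$ for every $q$, and $\emptyset\subseteq f(q)$ holds always. For the reverse relations, no $q\ne m$ satisfies $q\le m$. Correspondingly, $f(q)\subseteq\emptyset$ fails, because $f(q)\neq\emptyset$.

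This is the step that needs care, and it works precisely because $m$ is comparable to everything. For a merely minimal element we could not control non-relations, which explains the stronger hypothesis in (2). Only one reserved color is needed, giving the constant $1$.
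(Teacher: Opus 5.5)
Your proof is correct and is the paper's argument: discard the color classes of $\emptyset$ and $[n]$, pick one representative from each remaining class, and extend a copy of a member of $\cP^-(P)$ (respectively of $P\setminus\{m\}$) by $\emptyset$ or $[n]$ to get a rainbow copy of $P$. The only differences are cosmetic: in (2) you handle a smallest element via $\emptyset$ where the paper handles a largest element via $[n]$ (equivalent by complementation), and you write out the strong-copy check that the paper leaves implicit.
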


\begin{proof}
    For any coloring $c$ of $2^{[n]}$ without a rainbow weak copy of $P$, pick one set from every color class of $c$ except for the color classes of $\emptyset$ and $[n]$. The family obtained must be $\cP^-(P)$-free, as such a copy is rainbow by definition, and can be extended by $\emptyset$ or by $[n]$ to obtain a rainbow weak copy of $P$. This proves (1).

    If $m$ is the largest element of $P$, then for any coloring $c$ of $2^{[n]}$ without a rainbow strong copy of $P$, pick one set from every color class of $c$ except for the color class of $[n]$. The family obtained must be $(P\setminus \{m\})$-free, as such a copy can be extended by $[n]$ to obtain a rainbow strong copy of $P$. This proves (2).
\end{proof}

As a corollary, we obtain that if $P$ does not contain a weak copy of $2C_2$ nor a copy of $C_3$, then $\arw(n,P)=\min\{|P|-1\}$. Indeed, then $\cP^-(P)$ contains the antichain $A_k$ on $k=|P|-1$ elements, and any $k$ sets form a \textit{weak} copy of $A_k$, so $\LAw(n,A_k)=\min\{k-1,n\}$.

All weak $\{2C_2,C_3\}$-free posets $P$ are of the form $P'+A_k$, where $+$ denotes the union of an incomparable pair of copies of $P'$ and $A_k$ with $P'$ being either a fork $\vee_s$ ($a<b_1,b_2,\dots,b_s$) or a broom $\wedge_s$ ($c_1,\dots,c_s<d$). If $s=0,k=1$, then $\vee_s+A_k=A_1$ is the one element poset, and no coloring can avoid a rainbow strong copy of $A_1$. If $s=1,k=0$, then $\vee_s+A_k=C_2$ and to avoid a rainbow strong copy of $C_2$, all sets should receive the color of the empty set, so the coloring can use only 1 color. For all other values of $s$ and $k$, we have the following  proposition that is proved at the end of the paper.

\begin{prop}\label{noC2}\
    \begin{enumerate}
        \item 
       $\ar(n,\vee_s)=\ar(n,\wedge_s)=(s-1)(n-1)+2$ for any $s\ge 2$.
        \item 
        $\ar(n,A_k)=3+(k-2)(n-1)$  if $n\ge 2k$  for any $k\ge 2$.
        \item $\ar(n,\wedge_s+A_k)=\ar(n,\vee_s+A_k)=O_{s,k}(n^2)$ for any $s,k\ge 1$.
     \end{enumerate}
\end{prop}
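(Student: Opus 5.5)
For part (1) I will prove matching upper and lower bounds for $\vee_s$; the case $\wedge_s$ then follows by passing to complements, which reverses inclusion and maps rainbow copies of $\vee_s$ to rainbow copies of $\wedge_s$.

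\emph{Upper bound for $\vee_s$.} Let $c$ be a coloring of $2^{[n]}$ with no rainbow strong copy of $\vee_s$. From every color class other than those of $\emptyset$ and $[n]$, pick one representative, and let $\cF$ be the family of representatives. Every $F\in\cF$ lies in levels $1,\dots,n-1$. Suppose $\cF$ contained $s$ pairwise incomparable sets. They are distinct from $\emptyset$, and their colors differ from each other and from $c(\emptyset)$. Together with $\emptyset$ they would therefore form a rainbow strong copy of $\vee_s$, a contradiction. Hence $\cF$ has width at most $s-1$. By Dilworth's theorem, $\cF$ is a union of $s-1$ chains, and each chain meets levels $1,\dots,n-1$ in at most $n-1$ sets. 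So $|\cF|\le (s-1)(n-1)$, and adding back the (at most two) colors of $\emptyset$ and $[n]$ gives $\ar(n,\vee_s)\le (s-1)(n-1)+2$.

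\emph{Lower bound for $\vee_s$.} The construction should saturate this argument. Take $s-1$ full chains $\emptyset\subset\dots\subset[n]$ whose interiors are pairwise disjoint; this is possible when $n$ is large, e.g. via a symmetric chain decomposition or explicitly. Give their $(s-1)(n-1)$ interior sets distinct colors, give $[n]$ its own color, and give all remaining sets, including $\emptyset$, one common color. Any rainbow fork $a<b_1,\dots,b_s$ must use $s$ pairwise incomparable $b_i$'s. None of them is $[n]$, and at most one per chain can be a chain set, so one $b_i$ has the common color. Then $a$ must be a chain set lying below all the $b_i$.

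\emph{Main obstacle.} The naive verification of this construction fails: $a$ can sit lower on the same chain as some $b_j$. So the real work is to choose the chains (and possibly let $\emptyset$ rather than $[n]$ carry the common color) so that every interior chain set lies below at most $s-2$ of any $s-1$ pairwise incomparable chain sets. I would first try nested chains built on a common prefix ordering. I expect that fixing this construction is the hardest step of the whole proposition.

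\emph{Part (2).} For $A_k$ the upper bound is the same representative argument, one level deeper. Representatives avoiding the colors of $\emptyset$ and $[n]$ form a family in which any $k-1$ pairwise incomparable sets, together with a suitable set of a third "spare'' color that is incomparable to all of them, would give a rainbow $A_k$. With $n\ge 2k$ such a spare set can be found among singletons or co-singletons. This should force width at most $k-2$ after discarding one more class, giving $3+(k-2)(n-1)$. The lower bound is again $k-2$ internally disjoint chains with distinct colors, plus three colors for $\emptyset$, $[n]$ and everything else. Here the verification is easier, since any antichain uses at most one set per chain.

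\emph{Part (3).} For $\vee_s+A_k$, it suffices to prove the upper bound $O_{s,k}(n^2)$. Apply the same scheme with representatives: any antichain of size $s+k$ among representatives, combined with $\emptyset$ and a bounded number of extra choices, yields a rainbow copy. So after discarding $O(1)$ classes the representatives have bounded width, and hence size $O_{s,k}(n)$ — or at worst $O_{s,k}(n^2)$, if the repair requires removing one class per level. The case $\wedge_s+A_k$ again follows by complementation.
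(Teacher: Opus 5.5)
Your upper bound in (1) is the paper's argument in dual form, and your lower bound in (2) is the paper's construction. The other three ingredients are either missing or rest on a mechanism that fails.

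\textbf{Part (1), lower bound.} You leave the construction open, but it is the substance of (1). The missing idea is a \emph{cyclic} arrangement of the chains.
\begin{itemize}
\item For $2\le j\le s-1$ let $\cC_j$ consist of the cyclic intervals $[j,j+t]$, $0\le t\le n-2$; these all contain $j$ and miss $j-1$.
\item Let $\cC_1=\{[1],\dots,[s-2]\}\cup\{[s-2]\cup[s,s+t]:0\le t\le n-s\}$; these all contain $1$ and miss $s-1$.
\end{itemize}
Then no set of $\cC_i$ contains a set of $\cC_{i-1}$, with indices taken cyclically. Give the chain sets and $\emptyset$ distinct colors, and everything else the color of $[n]$.
\begin{itemize}
\item Below a chain set of $\cC_i$, a rainbow antichain avoiding its color has at most $s-2$ chain sets (none from $\cC_{i-1}$) plus one set of the common color.
\item Below a common-colored set, such an antichain uses at most one set per chain.
\end{itemize}
So there is no rainbow $\wedge_s$, and complementing handles $\vee_s$.

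This needs $s\ge3$. For $s=2$ the paper's check degenerates: $\{2\},\{3\}\subset\{2,3\}$ is rainbow in its coloring. In fact no construction exists once $n\ge4$, so your plan cannot succeed there. Let $X$ be the color of $[n]$. Proper nonempty sets with distinct non-$X$ colors must be comparable, so every system of representatives of those colors is a chain in levels $1,\dots,n-1$. Reaching $n+1$ colors would therefore force $n-1$ such color classes, each occupying its own level. Take $A\subset D$ from the classes on levels $2$ and $3$. The singleton $D\setminus A$ has color $X$ or the level-$1$ color, so $D\setminus A$, $A$, $D$ form a rainbow $\wedge_2$. Hence $\ar(n,\wedge_2)\le n$ for $n\ge 4$, and the stated value fails for $s=2$.

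\textbf{Part (2), upper bound.} Your spare-set step is false in general. For $k\ge3$, a $(k-1)$-antichain of representatives may contain $\{z\}$ and $[n]\setminus\{z\}$, and no set is incomparable to both. The spare must also avoid all $k-1$ colors. So no width bound follows, and ``discarding one more class'' is unsupported.

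The paper instead argues level by level, with level $k$ as reference:
\begin{itemize}
\item each level carries at most $k-1$ colors;
\item each level $j\ne k$ carries at most $k-2$ colors absent from level $k$.
\end{itemize}
The second point holds because $k-1$ such $j$-sets are all incomparable to some $k$-set. If $j>k$, take a $k$-set containing a point outside each of them; if $j<k$, take one avoiding a point of each, using $n\ge 2k-1$. Summing gives $2+(k-1)+(n-2)(k-2)=3+(k-2)(n-1)$.

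\textbf{Part (3).} For $k\ge1$, every element of $\vee_s+A_k$ is incomparable to some other element. So neither $\emptyset$ nor $[n]$ can lie in a strong copy: the bottom of the fork must itself be incomparable to the $A_k$-part. Your bounded-width claim also has no support.

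The paper works inside a single level $j\le n-2$ for $\wedge_s+A_k$, then complements. Suppose roughly more than $\frac{sn}{2}+k+1$ colors meet level $j$, and take one representative per color.
\begin{itemize}
\item Double counting pairs $(F,x)$ with $x\notin F$ gives $s+1$ representatives whose union $M$ is a proper set.
\item Choosing $|M|$ minimal, the same count shows only $O(sn)$ representatives lie inside $M$, so $k+1$ lie outside it.
\item $M$ together with these $j$-sets, minus at most one sharing $M$'s color, is a rainbow strong $\wedge_s+A_k$.
\end{itemize}
Thus each level carries $O_{s,k}(n)$ colors, and summing over the $n+1$ levels gives $O_{s,k}(n^2)$.
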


\medskip

For weak copies of $P$, the lower bound of Proposition \ref{lowertriv} and the upper bound of Proposition \ref{uppertriv} differ only by 1 and the possible difference of $\LAwc(n,P)$ and $\LAw(n,P)$. In most cases, these are negligible. For example, if $P=\Diamond$ is the diamond poset on four elements $a,b,c,d$ with $a<b,c<d$ being all relations, then $\cP^-(\Diamond)=\{\vee, \wedge\}$. A result of Katona and Tarján \cite{KT} states that $\LAw(n,\{\vee,\wedge\})=2\binom{n-1}{\lfloor \frac{n-1}{2}\rfloor}$ and an extremal family is $\binom{[n-1]}{\lfloor \frac{n-1}{2}\rfloor}\cup \{F\cup \{n\}:F\in \binom{[n-1]}{\lfloor \frac{n-1}{2}\rfloor}\}$, which is convex. (Here and throughout the paper, $\binom{S}{k}$ denotes the family of all $k$-subsets of $S$, and $\binom{[n]}{k}$ is often referred to as the $k$th layer of $2^{[n]}$.) So, $\LAw(n,\cP^-(\Diamond))=\LAwc(n,\cP^-(\Diamond))$. Finally, if in a coloring $c$ of $2^{[n]}$ the empty set and $[n]$ receive distinct colors, then to avoid a rainbow weak copy of $\Diamond$, one can only use at most one further color. Therefore, $\arw(n,\Diamond)=1+\LAw(n,\cP^{-}(\Diamond))=1+\LAwc(n,\cP^{-}(\Diamond))=1+2\binom{n-1}{\lfloor \frac{n-1}{2}\rfloor}$.

How far apart can $\LAw(n,\cP^{-}(P))$ and $\LAwc(n,\cP^{-}(P))$ be? It was conjectured \cite{B,GLu} that for any set $\cP$ of posets, $\LAw(n,\cP)$ ($\LA(n,\cP)$) is asymptotically the size of most number of middle layers of $2^{[n]}$ that one can have without containing of weak (strong) copy of a member of $\cP$. Note that the union of consecutive layers is always a convex family. The conjecture was disproved by Ellis, Ivan, and Leader \cite{EIL}, who showed that $\LA(n,B_d),\LAw(n,B_d)\ge (d+\varepsilon-o(1))\binom{n}{\lfloor \frac{n}{2}\rfloor}$ for $d\ge 4$, where $B_d$ is the Boolean poset on $2^{[d]}$ ordered by inclusion. Although the construction of \cite{EIL} for $B_d$ can be easily turned into a construction for $B_d\setminus \{\emptyset\}$ or for $B_d\setminus \{[d]\}$, we do not see how it could be turned into a construction that avoids both of them, i.e. a construction that is $\cP^-(B_d)$-free. 

\begin{prob}
    Is there a poset $P$ and a positive real $\varepsilon$ such that $\LAw(n,\cP^-(P))\ge (1+\varepsilon)\LAwc(n,\cP^-(P))$?
\end{prob}

\subsection{Main results}
We will focus on forbidding strong copies of $P$ and on cases not covered by Proposition \ref{uppertriv} (2), i.e. on posets having neither largest nor smallest elements.

The oriented Hasse diagram $\overrightarrow{H}(P)$ of a poset $P$ is an oriented graph with vertex set $P$ where $\overrightarrow{pq}$ is an arc if $q$ covers $p$, i.e. $p\leqslant_P q$ and there is no $z\neq p,q$ with $p\leqslant_P z\leqslant_P q$. We obtain the (unoriented) Hasse diagram $H(P)$ of $P$ from $\overrightarrow{H}(P)$ by dropping the orientation of all arcs. A poset is a \textit{tree poset} if $H(P)$ is a tree.

Bukh proved \cite{B} $\LAw(n,T)=(h(T)-1+o(1))\binom{n}{\lfloor n/2\rfloor}$ for any tree poset $T$, where $h(T)$ denotes the \textit{height of $T$}, the length of the longest chain in $T$ or equivalently the most number of vertices in a directed path of the directed Hasse diagram. This was strengthened by Boehnlein and Jiang \cite{BJ} to $\LA(n,T)=(h(T)-1+o(1))\binom{n}{\lfloor n/2\rfloor}$. As the construction showing the lower bound in these results, consists of the middle $h(T)-1$ layers, which is a convex family, we obtain $\LAwc(n,T)=(1+o(1))\LAw(n,T)=(1+o(1))\LAc(n,T)=(1+o(1))\LA(n,T)$ for all tree posets. Our first theorem determines $\ar(n,T)$ asymptotically for all tree posets. 

Note that if for all maximal or minimal elements of $T$ we have $h(T\setminus \{t\})=h(T)$, then the result follows from $\ar(n,T)\le \LA(n,T)$ of (\ref{triv}) and $1+\LA(n,\cP^-(T))\le \ar(n,T)$ of Proposition \ref{lowertriv} (2). Also, if $T$ has a largest or a smallest element, then the theorem follows from Proposition \ref{lowertriv} (2) and Proposition \ref{uppertriv} (2). So the hard part of the result is about tree posets $T$ containing a minimal or maximal element $m$ that is contained in all longest chains of $T$ and yet $m$ is neither smallest nor largest element of $T$. For example, in the poset on elements $a,b,c,d,e$ with $a\prec c, a\prec d, b\prec d, d\prec e$ being all cover relations, $e$ is contained in both chains of legth 3, namely $a\prec d\prec e, b\prec d\prec e$, but $e$ is not a largest element as $c\not\prec e$. Note that Proposition ~\ref{noC2} covers all tree posets for which $\ar(n,T)=o(\binom{n}{\lfloor n/2\rfloor})$, so the following theorem holds only for all other tree posets.

\begin{thm}\label{tree}
    For any tree poset $T$ with $T\neq \vee_s,\wedge_s$ for any $s$, we have $\ar(n,T)=(1+o(1))\LA(n,\cP^-(T))$.
\end{thm}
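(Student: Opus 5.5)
The lower bound is already in hand. By Proposition \ref{lowertriv} (2), $\ar(n,T)\ge 1+\LAc(n,\cP^-(T))$. Every $T'\in\cP^-(T)$ is a forest poset, so I would rather work with the quantity
\[
h^-(T)=\min\{h(T'):T'\in\cP^-(T)\}.
\]
The union of the middle $h^-(T)-1$ layers is convex. It contains no chain of length $h^-(T)$, so it is strong $\cP^-(T)$-free. Hence
\[
\ar(n,T)\ge (h^-(T)-1)\binom{n}{\lfloor n/2\rfloor}.
\]
Conversely, $\LA(n,\cP^-(T))\le \LA(n,T')$ for the minimizing $T'$. Each component of $T'$ is a tree poset, and so is the tree poset obtained by joining the components through a new element; this lets me apply Boehnlein--Jiang \cite{BJ} (or its forest extension). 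It gives
\[
\LA(n,\cP^-(T))=(h^-(T)-1+o(1))\binom{n}{\lfloor n/2\rfloor}.
\]
So the whole theorem reduces to the upper bound $\ar(n,T)\le (h^-(T)-1+o(1))\binom{n}{\lfloor n/2\rfloor}$. (When $h^-(T)=1$, the exclusion $T\ne\vee_s,\wedge_s$ and Proposition \ref{noC2} are what make the statement meaningful; I would treat that degenerate case separately by the $O(n^2)$ bounds.)

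The easy cases are as discussed before the theorem. If some extremal element $m$ can be deleted without lowering the height, then $h^-(T)=h(T)$ and (\ref{triv}) together with \cite{BJ} finish. If $T$ has a largest or smallest element, Proposition \ref{uppertriv} (2) finishes. So let $h=h(T)$ and assume that $h^-(T)=h-1$, i.e. that some maximal or minimal element $m$, which is neither largest nor smallest, lies on every longest chain. Fix a coloring $c$ without a rainbow strong copy of $T$. Pick one representative from each color class to get $\cF$, with $|\cF|$ equal to the number of colors. I want $|\cF|\le (h-2+o(1))\binom{n}{\lfloor n/2\rfloor}$.

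The main tool is the chain-counting (Lubell-type) method of Bukh and Boehnlein--Jiang. Suppose $|\cF|\ge (h-2+\varepsilon)\binom{n}{\lfloor n/2\rfloor}$. The standard argument then yields, for a positive proportion of maximal chains, $h-1$ members of $\cF$ on the chain in ``general position''. Their embedding lemma then builds a strong copy of $T\setminus\{m\}$ inside $\cF$, in which the element $m'$ covered by (or covering) $m$ is mapped to a set $F$ with plenty of room above it. Say $m$ is maximal. The extra freedom is the choice of representatives: every color class $C$ may supply any of its members. To complete the copy, I need a set $G\supsetneq F$ avoiding the colors already used, with $G$ incomparable to the images of all elements of $T$ not below $m$. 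The sets above $F$ that satisfy the incomparability constraints form a large family $\cU$, of size comparable to $2^{n-|F|}$ up to lower-order losses. If every set in $\cU$ has one of the $|T|-1$ used colors, I would recolor/reselect: replace the representative of such a color by a member of $\cU$ and re-embed. Repeating over many disjoint choices of $F$ (the $\varepsilon$-surplus of chains gives many), the color classes of the boundedly many used colors would have to absorb too much. In the end this contradicts either the bound on the number of colors or forces a rainbow copy.

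The main obstacle is exactly this last step: making the embedding robust. The copy of $T\setminus\{m\}$ must be chosen so that there are many candidates for $m$ whose colors cannot all lie in the bounded palette already used, while strong (non-)comparability with the rest of $T$ is preserved. I would first try to strengthen the Boehnlein--Jiang embedding into a ``many disjoint copies'' statement: find $\Omega(\binom{n}{\lfloor n/2\rfloor})$ copies of $T\setminus\{m\}$ whose $m'$-images have pairwise disjoint upsets restricted to nearby layers. Then only $O(1)$ colors can block each copy, and a counting/pigeonhole argument over colors finishes.
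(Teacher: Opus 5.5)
Your reduction to showing $\ar(n,T)\le (k-1+o(1))\binom{n}{\lfloor n/2\rfloor}$, when $h(T)=k+1$ and a maximal $m$ lies on every longest chain, is fine. (In the easy case you need \emph{every} extremal element, not \emph{some}, to be deletable without lowering the height.) The gap is the hard case itself. The recolor/re-embed/pigeonhole step is only a plan, and the strengthening you propose for it cannot hold as stated. Two $j$-sets with $|F\triangle F'|=2$ have a common superset of size $j+1$. A $j$-uniform family with pairwise disjoint upper shadows has at most $\binom{n}{j+1}/(n-j)=O(\frac1n\binom{n}{\lfloor n/2\rfloor})$ members. Summing over the $O(\sqrt{n\ln n})$ layers of $\tilde B_n$, you get only $o(\binom{n}{\lfloor n/2\rfloor})$ such copies.

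The missing idea is to use the freedom in choosing representatives \emph{before} embedding, not after. Let $\cF_c$ contain one representative from each color class that lies entirely inside $\tilde B_n$, and discard the other classes. Each discarded class contains a set of $2^{[n]}\setminus\tilde B_n$, so at most $|2^{[n]}\setminus\tilde B_n|\le\frac1{n^2}\binom{n}{\lfloor n/2\rfloor}$ colors are lost. Now every set of size at least $n/2+2\sqrt{n\ln n}$ carries a color not represented in $\cF_c$. So once a strong copy of $T\setminus\{m\}$ sits in $\cF_c$, you may map $m$ to a huge set and rainbowness is automatic. No robustness, re-embedding or counting over colors is needed.

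The second gap is in the embedding. You let $m$ cover a single element $m'$ and assert that the admissible sets above $F=G_{m'}$ form a large family. In a tree poset a maximal $m$ can cover several elements $m_1,\dots,m_a$, so $T\setminus\{m\}$ may be a forest. The image of $m$ must then contain $G=\bigcup_j G_{m_j}$ while not containing $G_p$ for any $p\not\leqslant_T m$. A strong copy of $T\setminus\{m\}$ as produced by \cite{BJ} does not prevent such a $G_p$ from lying inside $G$, and then no admissible set exists at all.

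This is exactly what Theorem~\ref{specialtree} adds. The Boehnlein--Jiang good-chain machinery is rerun with $G$ as one extra obstacle; $G$ lies in $\tilde B_n^T$ by Lemma~\ref{tilde}, which is why witnesses are allowed from $\tilde B_n^T$. Before that, $T\setminus\{m\}$ is placed inside the height-$k$ tree $T_0$, obtained by reversing the arcs $m_im$ and hanging a chain of length $k-2$ below $m$. Bukh's decomposition is then ordered, via the poset-distance part of Lemma~\ref{interval}, so that $\bigcup_j\cD_{T}(m_j)$ is embedded before everything else. With both ingredients, take $x_p\in G_p\setminus G$ for each $p\not\leqslant_T m$. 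Then $G'=[n]\setminus\{x_p: p\not\leqslant_T m\}$ completes a rainbow strong copy of $T$.
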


The \textit{crown poset} $O_{2k}$ is a poset with oriented Hasse diagram an antidirected cycle, i.e. $O_{2k}$ has elements $a_1,a_2,\dots,a_k,b_1,b_2,\dots,b_k$ with $a_i,a_{i+1}<b_i$ for all $i=1,2,\dots,k-1$ and $a_k,a_1<b_k$. $O_4$ is mostly called the butterfly poset and is denoted by $\bowtie$ based on the drawing of its Hasse diagram. $\LAw(n,\bowtie)=\binom{n}{\lfloor \frac{n}{2}\rfloor}+\binom{n}{\lfloor \frac{n}{2}\rfloor+1}$ was proved first by DeBonis, Katona, and Swanepoel \cite{DKS}. As $\bowtie$ is a  strong subposet of the tree poset $X$ on elements $a_1,a_2,b_1,b_2,c$ with $a_1,a_2<c<b_1,b_2$, Boehnlein and Jiang's result  implies $\LA(n,\bowtie)=(2+o(1))\binom{n}{\lfloor \frac{n}{2}\rfloor}$. For even $k\ge 4$, $\LAw(n,O_{2k})=(1+o(1))\binom{n}{\lfloor \frac{n}{2}\rfloor}$ was proved by Griggs and Lu \cite{GLu}, and the same result for odd $k\geq 7$ was obtained by Lu \cite{Lu}. The value of $\LA(n,O_{2k})$ for $k\ge 3$ has not yet been addressed.

As $\cP^-(O_{2k})$ consists of two tree posets both of height 2, Proposition \ref{lowertriv} (1) and Proposition \ref{uppertriv} (1) imply $\arw(n,O_{2k})=(1+o(1))\binom{n}{\lfloor n/2\rfloor}$ for all $k\ge 2$. By (\ref{triv}), we have $\ar(n,\bowtie)\le \LA(n,\bowtie)=(2+o(1))\binom{n}{\lfloor \frac{n}{2}\rfloor}$. It is easy to see that $\ar(n,\bowtie)\ge \binom{n}{\lfloor \frac{n}{2}\rfloor}+\binom{n}{\lfloor \frac{n}{2}\rfloor+1}+1$. Indeed, color sets of size $\lfloor \frac{n}{2}\rfloor$ and $\lfloor \frac{n}{2}\rfloor +1$ by their own color, and color all other sets white. As the intersection of two unrelated sets of different colors has size at most $\lfloor \frac{n}{2}\rfloor$ and the union of two unrelated sets of different colors has size at least $\lfloor \frac{n}{2}\rfloor+1$, this coloring does not admit a strong rainbow copy of $\bowtie$. This shows $\ar(n,\bowtie)=(2+o(1))\binom{n}{\lfloor \frac{n}{2}\rfloor}$. As $\cP^-(\bowtie)=\{\vee,\wedge\}$, this also shows an example when the lower bound of Proposition \ref{lowertriv} (2) and the trivial upper bound (\ref{triv}) differ asymptotically and the true asymptotics of $\ar(n,P)$ is given by (\ref{triv}). For all values $k\ge 3$ we show the following result.

\begin{thm}\label{crown}
    For any $k\ge 3$, we have $\ar(n,O_{2k})=(1+o(1))\binom{n}{\lfloor \frac{n}{2}\rfloor}$.
\end{thm}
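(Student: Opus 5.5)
The lower bound comes from Proposition \ref{lowertriv} (2). Deleting a maximal or minimal element of $O_{2k}$ leaves a path-shaped tree poset of height $2$. The middle layer $\binom{[n]}{\lfloor n/2\rfloor}$ is a convex family containing no two comparable sets, so it is strong $\cP^-(O_{2k})$-free. Hence $\ar(n,O_{2k})\ge \binom{n}{\lfloor n/2\rfloor}+1$. All the work is in the upper bound $\ar(n,O_{2k})\le (1+o(1))\binom{n}{\lfloor n/2\rfloor}$, which I would prove by contradiction.

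\textbf{Setup.} Fix a coloring $c$ of $2^{[n]}$ with $(1+2\varepsilon)\binom{n}{\lfloor n/2\rfloor}$ colors.
\begin{enumerate}
\item Only $o(\binom{n}{\lfloor n/2\rfloor})$ sets have size outside $[n/2-\sqrt{n}\log n,\, n/2+\sqrt{n}\log n]$. Discarding every color that appears on such a set therefore leaves $(1+\varepsilon)\binom{n}{\lfloor n/2\rfloor}$ colors whose classes live entirely in the middle range $\cM$.
\item Pick one representative from each remaining color. This gives a family $\cF\subseteq\cM$ that is automatically rainbow and has Lubell mass at least $1+\varepsilon-o(1)$ over $\cM$.
\item By the standard chain-counting argument used for tree posets (Bukh; Boehnlein--Jiang), $\cF$ contains many strong copies of the path $P=O_{2k}\setminus\{b_k\}$, namely $a_1<b_1>a_2<\dots<b_{k-1}>a_k$.
\item I would aim for a supersaturation statement. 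There are at least $\delta\,n!$ pairs (maximal chain, copy) such that the copy is strong, lives in $\cM$, and its chain passes through $a_1$ and through some set above $a_1$. This leaves room to choose the closing element.
\end{enumerate}

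\textbf{Closing the crown.} Given a strong copy of $P$ in $\cF$, we need a set $B\supseteq a_1\cup a_k$ with three properties: $B$ is unrelated to $a_2,\dots,a_{k-1}$ and to all $b_i$, and $c(B)$ differs from the $2k-1$ colors already used. Strongness of the closed crown is where $k\ge 3$ enters.
\begin{itemize}
\item No $b_i$ contains both $a_1$ and $a_k$, since $a_1<b_i$ only for $i=1$, $a_k<b_i$ only for $i=k-1$, and $1\neq k-1$. So $a_1\cup a_k\not\subseteq b_i$, which gives $B\not\subseteq b_i$ for free.
\item $B\not\supseteq b_i$ and $B\not\supseteq a_j$ for $1<j<k$ can be forced by taking $B=a_1\cup a_k\cup S$ with $S$ avoiding one chosen element of each $b_i\setminus(a_1\cup a_k)$ and of each $a_j\setminus(a_1\cup a_k)$.
\end{itemize}
Since $|a_1\cup a_k|\le n/2+O(\sqrt{n}\log n)$ and only $2k-2$ elements are forbidden, there are exponentially many admissible $B$. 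These include sets in $\cM$ and sets of many different sizes.

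\textbf{Main obstacle: the color condition on $B$.} Every admissible $B$ might carry one of the $2k-1$ colors of the copy, and we cannot control this for a single copy. I would handle it by double counting.
\begin{itemize}
\item Call a copy \emph{bad} if all of its admissible closing sets receive one of its own colors. Then some vertex $v$ of the copy has a color class containing a positive fraction of the admissible $B$'s, which are supersets of $a_1\cup a_k$.
\item Charge each bad copy to such a color class. Only $O(\binom{n}{\lfloor n/2\rfloor})$ colors exist, yet each bad copy forces its charged class to be huge and spread over many layers.
\item Some color class must then meet many sets on a long segment of a typical chain. Changing the representative of that color to one of these sets, or dropping it, does not reduce the count of copies.
\item Iterating this re-choice of representatives gives an $\cF$ whose color classes are ``thin'', meaning they meet every chain in $o(1)$ expected sets. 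For thin classes most copies of $P$ are good, which yields a rainbow strong $O_{2k}$, a contradiction.
\end{itemize}
Making the thinness threshold and the representative-switching argument precise is the step I expect to be hardest. The remaining steps, namely the restriction to $\cM$ and the supersaturation for the height-$2$ path $P$, should follow known Lubell-function techniques.
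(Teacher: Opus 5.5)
Your lower bound and your restriction to color classes lying entirely in the middle range are fine. The closing step, however, has a genuine gap. You take $B=a_1\cup a_k\cup S$ with $S$ avoiding ``one chosen element of each $a_j\setminus(a_1\cup a_k)$''. This silently assumes $a_j\not\subseteq a_1\cup a_k$ for all $1<j<k$, but strongness of the path only gives $a_j\not\subseteq a_1$ and $a_j\not\subseteq a_k$ separately.

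For example, take $a_1=\{1,2,3\}$, $a_2=\{2,3,4\}$, $a_3=\{3,4,5\}$, $b_1=\{1,2,3,4\}$, $b_2=\{2,3,4,5\}$, padded by a common set to move them into the middle layers. This is a strong copy of $P_5$ with $a_1\cup a_3\supseteq a_2,b_1,b_2$. So no closing set exists at all, and your claim of exponentially many admissible $B$ fails.

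The Bukh/Boehnlein--Jiang chain-counting machinery produces strong copies of $P_{2k-1}$ but gives no control of this kind. Finding a copy with $a_j\not\subseteq a_1\cup a_k$ for every $2\le j\le k-1$ is exactly Theorem \ref{crownemb}, which is the real work in the paper. That proof proceeds as follows:
\begin{enumerate}
\item It splits $\cF$ according to how many $(|F|-j)$-subsets each $F$ has in $\cF$.
\item It shows the sparse part has Lubell mass below $1+\varepsilon/2$.
\item In the dense part it grows a spider $S^{k-2,\ell}$ whose edges add or remove (almost) fresh elements.
\item It then hangs, below two leaves, new sets that both omit an element $y$ lying in every set of the spider, so $y$ witnesses the non-covering property.
\end{enumerate}
Nothing in your plan replaces this step.

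Conversely, the obstacle you single out as the main one is not a real obstacle: your own first step removes it. Once elements $x_j\in a_j\setminus(a_1\cup a_k)$ exist, take $B=[n]\setminus\{x_2,\dots,x_{k-1}\}$. Then:
\begin{enumerate}
\item $B$ contains $a_1\cup a_k$.
\item $B$ misses every middle $a_j$, and hence (as $k\ge 3$) contains no $b_i$.
\item $B$ is too large to lie inside any $b_i$.
\item $B$ lies outside the middle range, so its color was discarded in your first step and differs from all $2k-1$ colors of the copy.
\end{enumerate}
This is exactly how the paper closes the crown. Your bad-copy charging and representative-switching scheme, which as written is too vague to verify, is unnecessary. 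The effort belongs in the embedding with the non-covering property.
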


After submitting our manuscript, we learned from Maria Axenovich that Jacob Manske in his PhD thesis had already
introduced anti-Ramsey problems for posets and obtained some preliminary results. The only overlap between his and
our results is Proposition \ref{noC2} (2).

\section{Proofs}

In the next subsection, we state some embedding results for tree posets and how they imply Theorem \ref{tree} and Theorem \ref{crown}. Then in Section 2.2, we prove the embedding results. Finally, Section 2.3 contains the proof of Proposition \ref{noC2}.

\subsection{Statements of embedding results and their implications}

To be able to state our results, we need to introduce some notation. 

Let $\tilde{B}_n=\{F\in 2^{[n]}:||F|-n/2|\le 2\sqrt{n\ln n}\}$ and $\tilde{B}_n^T=\{F\in 2^{[n]}:||F|-n/2|\le 4|T|\sqrt{n\ln n}\}$. Note that by Chernoff's inequality, $|2^{[n]}\setminus \tilde{B}_n^T|\le|2^{[n]}\setminus \tilde{B}_n|\le \frac{1}{n^2}\binom{n}{\lfloor \frac{n}{2}\rfloor}$ holds for large enough $n$.

For any poset $P$ and $p\in P$, we set $\cD_P(p)=\{q\in P:q\leqslant_P p\}$. Similarly, for any family $\cF\subseteq 2^{[n]}$ of sets and $G\in 2^{[n]}$, we write $\cD_\cF(G)=\{F\in \cF: F\subseteq G\}$ and $\cD(G)=\{F:F\subseteq G\}$, $\cU_n(G)=\{F\in 2^{[n]}:F\supseteq G\}$ and we omit $n$ from the subscript when it is clear from context.

Finally, if $\cG$ is a strong copy of a poset $P$ shown by the bijection $f:P\rightarrow \cG$, then the set $f(p)\in \cG$ will be denoted by $G_p$. 

\begin{thm}\label{specialtree}
    Let $T$ be a tree poset of height $k+1$ that contains a maximal element $m$ such that all chains of length $k+1$ in $T$ contain $m$. Then for any positive real $\varepsilon$, there exists $n_0=n_0(T,\varepsilon)$ such that if $n\ge n_0$ and $\cF\subseteq \tilde{B}_n$ with $|\cF|= (k-1+\varepsilon)\binom{n}{\lfloor n/2\rfloor}$, then $\cF$ contains a strong copy $\cG$ of $T\setminus \{m\}$ such that for any $p\in T\setminus \cD_T(m)$ the image $G_p$ is not a subset of $\cup_{d\in \cD_T(m)\setminus \{m\}}G_d$.
\end{thm}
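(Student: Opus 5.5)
We will follow the chain-counting approach of Bukh and Boehnlein--Jiang. The plan is to pass to a subfamily in which most full chains carry many sets, build the copy of $T\setminus\{m\}$ greedily along the Hasse tree, and place the elements outside $\cD_T(m)$ last, choosing each so that it escapes the union of the sets already placed below $m$.

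\textbf{Step 1: cleaning.} Let $\cC$ be the set of maximal chains of $2^{[n]}$. The Lubell function gives
\[
\sum_{C\in\cC}|C\cap\cF|=\sum_{F\in\cF}|F|!(n-|F|)!\ge (k-1+\varepsilon)\,n!,
\]
so the average number of sets of $\cF$ on a full chain is at least $k-1+\varepsilon$. We delete from $\cF$ every set lying in few ``rich'' chains, i.e.\ chains meeting $\cF$ in at least $k$ sets. The standard argument (Bukh, Boehnlein--Jiang) shows that a positive proportion of chains are rich. After deletion, every remaining $F$ has the following property: a fraction $\ge\delta(\varepsilon,T)$ of the full chains through $F$ contain $k$ sets of $\cF$, and for $F$ at a given position on such a chain we may demand at least $a$ sets below and $b$ sets above $F$ for any prescribed split $a+b=k-1$.

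Because $\cF\subseteq\tilde B_n$, all sets have size $n/2\pm O(\sqrt{n\ln n})$. Consequently any two sets on a chain differ in only $O(\sqrt{n\ln n})$ elements, while chains through a set branch in $\Omega(n)$ directions.

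\textbf{Step 2: embedding.} We embed $T\setminus\{m\}$ vertex by vertex along a traversal of its Hasse tree. The elements of $\cD_T(m)\setminus\{m\}$ are embedded first, rooted near $m$'s neighbourhood; the remaining elements $p\in T\setminus\cD_T(m)$ are embedded afterwards. At each step we extend from an already placed set $G_q$ to a neighbour $p$ in the Hasse diagram. The height hypothesis is what makes this possible: every longest chain uses $m$, so the chains of $T\setminus\{m\}$ have at most $k$ elements, and the $k$ sets guaranteed by richness suffice at every extension.

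Strong-copy (non-)comparabilities are enforced by a ``generic direction'' choice. When walking a chain up from $G_q$, the first added element is chosen outside a bounded union of previously used sets, of size $O(|T|\cdot n/2)$ but with at most $O(|T|\sqrt{n\ln n})$ new elements relative to $G_q$. A positive fraction of the rich chains leaving $G_q$ therefore start with an element avoiding the forbidden, already specified elements, and so avoid unwanted containments.

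\textbf{Step 3: the extra condition.} For $p\notin\cD_T(m)$, set $U=\bigcup_{d\in\cD_T(m)\setminus\{m\}}G_d$. The sets $G_d$ form a connected piece of the tree, so they all lie within symmetric distance $O(|T|\sqrt{n\ln n})$ of one another, and $|U|\le n/2+O(|T|\sqrt{n\ln n})$. Now $p$ is reached from some already embedded $G_q$ by a path in the Hasse tree.

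If $q\notin\cD_T(m)$, we inductively have an element $x\in G_q\setminus U$. When $p>q$, the set $G_p\supseteq G_q$ inherits $x$. When $p<q$, we choose the chain down from $G_q$ to keep $x$, since a positive fraction of chains remove $x$ late. If $q\in\cD_T(m)$, then $p>q$ (otherwise $p\in\cD_T(m)$), and the first element added above $G_q$ can be chosen outside $U$: there are $\ge n/2-O(\sqrt{n\ln n})$ such elements, giving a positive fraction of chains.

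\textbf{Main obstacle.} The hardest part is the quantitative version of Step 1. We must guarantee that, from every embedded set, a $\delta$-fraction of rich chains survive simultaneously several restrictions: the first step avoids a set, some element is kept, and positions split as $a+b$. We would handle this by iterating the cleaning finitely many times, with $\delta$ shrinking each round, and with $n_0$ large enough that each restriction removes only an $O(\sqrt{\ln n/n})$ fraction or a fixed-constant fraction. Note that restrictions like keeping $x$ until late cost a constant fraction, not $o(1)$; the cleaning thresholds must therefore be chosen depending on $|T|$.
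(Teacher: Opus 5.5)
\textbf{The main gap is in Step 3.} There you assert that the sets $G_d$, $d\in\cD_T(m)\setminus\{m\}$, ``form a connected piece of the tree'', and you deduce $|U|\le n/2+O(|T|\sqrt{n\ln n})$. This fails as soon as $m$ has two lower covers $m_1,m_2$.

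\begin{enumerate}
\item The only Hasse path between $\cD_T(m_1)$ and $\cD_T(m_2)$ passes through $m$. So $T\setminus\{m\}$ is a forest, there is no single traversal, and nothing in your procedure relates $G_{m_1}$ to $G_{m_2}$.
\item The two sets may even be complementary. Then $U=[n]$ and no admissible $G_p$ exists at all.
\item More generally, whenever $|U\setminus G_q|=\Theta(n)$, the chains whose first step above $G_q$ leaves $U$ are only a constant fraction of the chains through $G_q$. They need not meet your $\delta$-fraction of usable chains. What the step needs is that the bad fraction $|U\setminus G_q|/(n-|G_q|)$ be $o(1)$, not that the good fraction be positive.
\end{enumerate}

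The missing idea is a connector. Embed a connected tree of height at most $k$ that contains $T\setminus\{m\}$ as an induced subposet and joins the pieces $\cD_T(m_j)$. Embed the joined down-set first, and only then the rest.

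The paper does this by reversing the arcs $\overrightarrow{m_jm}$ and hanging a chain $v_1<\dots<v_{k-2}$ below $m$, so that $m$ becomes a common lower bound of the $m_j$. Then:
\begin{itemize}
\item Lemma~\ref{tilde} places $G=\bigcup_j G_{m_j}$ in $\tilde B_n^T$.
\item The decomposition of Lemma~\ref{interval} is ordered so that $\cD=\bigcup_j\cD_{T_0'}(m_j)$ is embedded first.
\item $G$ is added as one extra obstacle in the Boehnlein--Jiang goodness machinery (Theorem~\ref{goodT}), whose witnesses are allowed in $\tilde B_n^T$ for exactly this reason.
\end{itemize}
A caution if you adopt this fix: reversing the arcs can push the height above $k$ when some $m_j$ has an upper cover other than $m$. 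A new element covering one minimal element of each $\cD_T(m_j)$ creates only chains of length $2$ and serves equally well.

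\textbf{Step 1 is also wrong as stated}, though you are close to a valid alternative route here.
\begin{itemize}
\item The proportion of maximal chains meeting $\cF$ in $k$ sets can be of order $1/\sqrt n$. For example, take the $k-1$ full layers up to $\lfloor n/2\rfloor$, plus all sets of size in $(n/2,n/2+\sqrt n]$ containing a fixed $c$-set, with $2^{-c}=\Theta(\varepsilon/\sqrt n)$.
\item Not every split can be demanded: a maximal member of $\cF$ has nothing above it on any chain.
\end{itemize}
What you need is a statement per pair $(F,d)$, where $d$ is the level in a $k$-saturated supertree (Lemma~\ref{saturated}). One way to get it:
\begin{enumerate}
\item Trim $\cF$ so that $\lambda_n(\cF)\le k$.
\item Repeatedly discard pairs $(F,d)$ for which fewer than $\delta|F|!(n-|F|)!$ maximal chains through $F$ carry a $k$-marking by surviving pairs with $F$ in position $d$.
\item Use as potential the number of pairwise disjoint such markings on a chain, summed over all chains. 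It starts at least $\varepsilon n!/k$ and drops by less than $\delta|F|!(n-|F|)!$ per discard.
\end{enumerate}
So for $\delta<\varepsilon/(2k^3)$ some markings survive. Every pair occurring in one of them is then extendable along a $\delta$-fraction of its chains, at all depths simultaneously.

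Each of the $O(|T|)$ events you must avoid has probability $O(|T|\sqrt{\ln n/n})$ under a uniform maximal chain through $G_q$. Hence this fraction-based goodness can replace the witness-based goodness of Theorem~\ref{goodT}. Your tracking of an element $x\in G_q\setminus U$ likewise replaces the paper's use of $G$ as an obstacle.

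\textbf{Two smaller corrections.}
\begin{itemize}
\item Walking up from $G_q$, you must also rule out $G_r\subseteq G_p$ for earlier incomparable $r$. Your first-element rule does not do this. Fix $y\in G_r\setminus G_q$ and require that it is not added within the first $4\sqrt{n\ln n}$ steps.
\item Keeping $x$ on a downward chain costs only an $O(\sqrt{\ln n/n})$ fraction of chains, not a constant fraction.
\end{itemize}
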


Let $P_{2k-1}$ denote the poset that we obtain from $O_{2k}$ by removing a maximal element, i.e. $P_{2k-1}$ has $2k-1$ elements $a_1,a_2,\dots,a_k,b_1,b_2,\dots,b_{k-1}$ with $a_i,a_{i+1}<b_i$.

\begin{thm}\label{crownemb}
For any $k\ge 3$ and positive real $\varepsilon$, there exists $n_0=n_0(\varepsilon)$ such that if $n\ge n_0$ and $\cF\subseteq \tilde{B}_n$ with $|\cF|= (1+\varepsilon)\binom{n}{\lfloor n/2\rfloor}$, then $\cF$ contains a strong copy $\cG$ of $P_{2k-1}$ such that $G_{a_i}\not\subseteq G_{a_1}\cup G_{a_k}$ for all $i=2,\dots,k-1$.
\end{thm}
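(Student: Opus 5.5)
We prove Theorem \ref{crownemb} with the chain-counting (Lubell function) method of Griggs--Lu and Bukh, applied to full chains in $\tilde{B}_n$. Let $\cF\subseteq\tilde{B}_n$ with $|\cF|=(1+\varepsilon)\binom{n}{\lfloor n/2\rfloor}$.

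\emph{Step 1: a rich subfamily.} The average number of members of $\cF$ on a full chain of $2^{[n]}$ is at least $1+\varepsilon$. We delete sets lying in few chains through two members of $\cF$. Concretely, we iteratively remove every $F\in\cF$ for which, among the maximal chains through $F$, at most an $\varepsilon/2$ fraction meet another member of $\cF$. A double count shows that the surviving family $\cF'$ is nonempty and keeps a positive proportion. Moreover, for every $F\in\cF'$, a random maximal chain through $F$ meets another member of $\cF'$ with probability at least roughly $\varepsilon/4$. Since all sets lie in $\tilde{B}_n$, all such comparable pairs $F\subsetneq F'$ have $|F'\setminus F|\le 4\sqrt{n\ln n}=o(n)$.

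\emph{Step 2: building the zigzag greedily.} We build $G_{a_1}\subset G_{b_1}\supset G_{a_2}\subset G_{b_2}\supset\dots\supset G_{a_k}$ one element at a time. At each step we use the fact that, from a set $G_{b_i}\in\cF'$, a positive proportion of the sets obtained by going down a random chain give a member $G_{a_{i+1}}\in\cF'$. Likewise, from $G_{a_{i+1}}$ a positive proportion of upward chains give a $G_{b_{i+1}}\in\cF'$. This is Bukh's tree-embedding argument; $P_{2k-1}$ is a path-shaped tree poset of height 2. The new sets are chosen along random chains, so we can require them to avoid the sets already chosen, and in particular to be distinct from them. This is possible because each forbidden configuration is hit by only an $o(1)$ fraction of the chains.

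\emph{Step 3: strongness and the extra condition.} Strongness requires that the nonadjacent pairs are incomparable. The additional requirement is $G_{a_i}\not\subseteq G_{a_1}\cup G_{a_k}$ for $2\le i\le k-1$. We secure both by choosing elements along random chains. When we descend from $G_{b_{i}}$ to $G_{a_{i+1}}$, we pick the removed elements uniformly at random, and when we ascend we add uniformly random elements. Because $|G_{b_i}\setminus G_{a_i}|$ is large (at least $1$, and typically growing), a new set contains, with high probability, an element not in any fixed earlier set. Incomparability then fails only with probability $o(1)$, and positive-probability events dominate.

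The delicate part is the condition involving $G_{a_k}$, since $G_{a_k}$ is chosen last. We handle it by fixing witnesses in advance. For each $2\le i\le k-1$, choose an element $x_i\in G_{a_i}\setminus G_{a_1}$; one exists by incomparability. We then require that the remaining steps keep every $x_i$ out of $G_{a_k}$. This is possible because going down from $G_{b_{k-1}}$ removes random elements, and we can demand in addition that all $x_i$ with $x_i\in G_{b_{k-1}}$ are removed. This is at most $k$ specific elements, so the demand costs only a factor polynomial in $1/n$ over a set difference of size $\Theta(\cdot)$.

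The main obstacle is exactly this last step. Forcing specific elements to be removed may lower the probability of hitting $\cF'$ by more than a constant factor. We therefore expect to need a refined ``rich'' condition in Step 1: for every $F\in\cF'$ and every small set $S$ of at most $k$ elements, a positive fraction of downward chains from $F$ that remove $S$ early still meet $\cF'$. We would obtain this by a second cleaning round, averaging over chains that begin by deleting $S$. A cleaner alternative is to use $k\ge3$ and choose $G_{a_k}$ first. We would build the path from both ends toward the middle and finish at an interior $a_j$, so that the condition becomes a ``new set avoids containment in a fixed set'' event, which holds with probability $1-o(1)$.
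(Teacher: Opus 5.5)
There is a genuine gap, and it is exactly at the step you flagged: neither of your two fixes for $G_{a_i}\not\subseteq G_{a_1}\cup G_{a_k}$ works. Your Steps 1--3 at best reprove that $\cF$ contains a strong copy of $P_{2k-1}$. That already follows from Boehnlein--Jiang, since $P_{2k-1}$ is a height-$2$ tree poset, so the whole content of the theorem is the extra condition. Forcing the final down-step to delete prescribed witnesses is not a polynomial loss; it can be impossible.

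Take $n$ even and $\cF=\cF^-\cup\cF^+$. Here $\cF^-$ consists of the sets in $\binom{[n]}{n/2}$ with odd element-sum, and $\cF^+\subseteq\binom{[n]}{n/2+1}$ is arbitrary of the right size. Comparable members differ in exactly one element. So for $k=3$ you have $G_{b_1}=G_{a_2}\cup\{p\}$, $G_{b_2}=G_{a_2}\cup\{q\}$, $G_{a_1}=G_{b_1}\setminus\{u\}$ and $G_{a_3}=G_{b_2}\setminus\{w\}$, with $u,w\in G_{a_2}$. Then $G_{a_1}\cup G_{a_3}=(G_{a_2}\setminus(\{u\}\cap\{w\}))\cup\{p,q\}$, so the condition holds if and only if $u=w$.

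In your scheme the witness $x_2=u$ is forced. Then $G_{a_3}$ must be the single prescribed set $G_{b_2}\setminus\{u\}$, which need not lie in $\cF$. For larger $k$, two distinct witnesses in $G_{b_{k-1}}$ cannot both be deleted by one step.

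Your refined richness condition, which would rescue this, is false here. Every $F\in\cF^+$ contains both parities, hence an element $x$ with $F\setminus\{x\}$ of even sum. So $F\setminus\{x\}\notin\cF$, and no downward chain through it meets $\cF$ again. The second cleaning therefore deletes all of $\cF^+$ and leaves an antichain.

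Building from both ends fails for another reason. The last set $G_{a_j}$ must lie below two independently grown sets $G_{b_{j-1}}$ and $G_{b_j}$, which here forces $|G_{b_{j-1}}\setminus G_{b_j}|=1$. A greedy or random extension argument has no control over such a closing step.

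The example shows what is needed: a single element $y$ lying in every interior set and omitted by \emph{both} endpoints, with the two end-steps coordinated. The paper obtains this by pigeonholing over many candidate paths.

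\begin{enumerate}
\item Using the Lubell function, it splits $\cF=\cF_1\cup\cF_2\cup\cF_3$ with $\lambda_n(\cF_3)<1+\varepsilon/2$.
\item It fixes a gap $j$ and works in the bipartite graph of pairs $F\subseteq F'$ with $|F'\setminus F|=j$, passing to a subgraph of large minimum degree.
\item There it builds the interior part $b_1,a_2,\dots,a_{k-1},b_{k-1}$ many times over, as a strong copy of the spider $S^{k-2,\ell}$ with maximal leaves. Strongness comes from the edges' difference sets being disjoint (bounded $j$) or nearly disjoint (large $j$).
\item Let $G$ be the intersection of all sets of the spider, so $|G|\ge n/2-o(n)$. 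Each leaf $F^a$ has so many down-neighbours in $\cF$ compatible with strongness that, together, they omit at least $\alpha(\varepsilon)n$ (resp.\ $n^{2/3}$) distinct elements of $G$.
\item With $\ell>1/\alpha$ (resp.\ $\ell=n^{1/3}$) legs, two leaves $F^a,F^b$ share such an omitted element $y$.
\end{enumerate}

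Their two legs, together with the two down-neighbours avoiding $y$, form $P_{2k-1}$. Moreover $y\in G_{a_i}\setminus(G_{a_1}\cup G_{a_k})$ for all $2\le i\le k-1$. This simultaneous choice over many branches is the idea your proposal lacks.
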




\begin{proof}[Proof of Theorem \ref{tree} and Theorem \ref{crown}]
    The lower bounds of both theorems follow from Proposition \ref{lowertriv} (2).

    For the upper bounds let $c$ be any coloring of $2^{[n]}$. Let $\cF_c$ be a family that we obtain by picking one set from each color class that does not contain any set from $2^{[n]}\setminus \tilde{B}_n$. Clearly, the number of colors used by $c$ is at most $|2^{[n]}\setminus \tilde{B}_n|+|\cF_c|\le \frac{1}{n^2}\binom{n}{\lfloor \frac{n}{2}\rfloor}+|\cF_c|$. 
    
    For Theorem \ref{crown}: if $|\cF_c|\le (1+o(1))\binom{n}{\lfloor \frac{n}{2}\rfloor}$, then we are done. Otherwise, by Theorem \ref{crownemb}, we obtain a strong copy $A_1,A_2,\dots, A_k,B_1,\dots,B_{k-1}$ of $P_{2k-1}$ such that $A_1\cup A_k$ does not contain any of the $A_i$s ($i=2,3,\dots,k-1$).
    Let $B_k$ be any superset of $A_1\cup A_k$ of size at least $\frac{n}{2}+2\sqrt{n\ln n}$ with $A_i\not\subseteq B_k$ for all $i=2,3,\dots,k-1$. Such a set exists as there is $a_i\in A_i\setminus (A_1\cup A_k)$ for all $i$, so $[n]\setminus \{a_2,\dots,a_{k-1}\}$ can play the role of $B_k$.
    Then $A_1,A_2,\dots, A_k,B_1,\dots,B_{k-1},B_k$ form a strong copy of $O_{2k}$. Also, it is rainbow as the $A_i$s, $B_i$s represent different color classes of $c$, and these color classes do not contain sets from $2^{[n]}\setminus \tilde{B}_n$, so their colors are distinct from that of $B_k$.

    For  Theorem \ref{tree}: as observed in the introduction, it is enough to consider tree posets $T$ of height $k+1$ with a maximal or minimal element $m$ that is contained in all chains of length $k+1$ in $T$. Without loss of generality, we may assume that $m$ is maximal. If $|\cF_c|\le (k-1+o(1))\binom{n}{\lfloor \frac{n}{2}\rfloor}$, then we are done. Otherwise, by Theorem \ref{specialtree}, $\cF$ contains a strong copy $\cG$ of $T\setminus \{m\}$ such that $G:=\cup_{d\in \cD_T(m)\setminus \{m\}}G_d$ does not contain any $G_p$ with $p\notin \cD_T(m)$. Then a superset $G'$ of $G$ of size at least $\frac{n}{2}+2\sqrt{n\ln n}$ still not containing $G_p$ for any $p\notin \cD_T(m)$ together with $\cG$ form a strong copy of $T$. The existence of $G'$ and the rainbow property of $\cG\cup\{G'\}$ follows as in the previous paragraph.
\end{proof}

\subsection{Proofs of embedding results}
There have been several papers on embedding tree posets into set families starting with the seminal paper of Bukh \cite{B} determining $\LAw(n,T)$ for any tree poset $T$. This was extended by Boehnlein and Jiang \cite{BJ} to strong copies of $T$. The proof of \ref{specialtree} will closely follow the reasoning of \cite{BJ}. Supersaturation and counting results on weak and strong copies of tree posets (for all or for some special classes) were obtained in \cite{PT} and then in \cite{Betal} and \cite{Jetal}. The proof of \ref{crownemb} uses some ideas from \cite{PT}.

\bigskip

We will first prove Theorem \ref{crownemb}. We start by introducing some notation and preliminaries.

The \textit{Lubell-mass} of a family $\cF\subseteq 2^{[n]}$ is defined as
\[
\lambda_n(\cF)=\sum_{F\in \cF}\frac{1}{\binom{n}{|F|}}=\frac{1}{n!}\sum_{\cC\in \mathbf{C}_n}|\cC\cap \cF|,
\]
the average number of sets in $\cF$ that a maximal chain contains chosen uniformly at random from $\mathbf{C}_n$, the set of all maximal chains in $2^{[n]}$. We will consider the max-partition of $\bC_n$ according to $\cF$, that is we write $\bC_F=\{\cC\in \bC_n: F ~\text{ is largest in}\ \cF\cap \cC\}$. Then $\frac{1}{|\bC_F|}\sum_{\cC\in \bC_{F}}|\cC\cap \cF|=\lambda_{|F|}(\cD_{\cF}(F))$ and $\lambda_n(\cF)$ is a weighted average of $\lambda_{|F|}(\cD_{\cF}(F))$ and so we have the following.

\begin{obs}\label{lubaver}
    If $\lambda_{|F|}(\cD_{\cF}(F))\le B$ for all $F$, then $\lambda_n(\cF)\le B$ holds.
\end{obs}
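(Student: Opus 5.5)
The statement is a simple averaging fact about the max-partition of $\bC_n$, so I will prove it directly from the identity preceding it rather than through any embedding machinery. Let $\cF^*$ denote the set of those $F\in\cF$ for which $\bC_F\neq\emptyset$. Chains $\cC$ with $\cC\cap\cF=\emptyset$ contribute $0$ to $\sum_{\cC\in\bC_n}|\cC\cap\cF|$. Every other chain has a unique largest element of $\cF$ and therefore lies in exactly one $\bC_F$. So the nonempty classes $\bC_F$ partition the chains meeting $\cF$, and we may write
\[
\lambda_n(\cF)=\frac{1}{n!}\sum_{F\in\cF^*}\sum_{\cC\in\bC_F}|\cC\cap\cF|.
\]

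The key step is to verify the identity $\frac{1}{|\bC_F|}\sum_{\cC\in\bC_F}|\cC\cap\cF|=\lambda_{|F|}(\cD_\cF(F))$ claimed just before the observation. A chain $\cC\in\bC_F$ splits into two parts:
\begin{itemize}
\item a maximal chain from $\emptyset$ to $F$, which is arbitrary among the $|F|!$ chains of $2^F$;
\item a maximal chain from $F$ to $[n]$, which must avoid $\cF$ strictly above $F$.
\end{itemize}
These choices are independent, so $\bC_F$ is a product, with the lower parts ranging uniformly over all maximal chains of $2^F$. Moreover $\cC\cap\cF$ equals the set of members of $\cD_\cF(F)$ lying on the lower part. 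Averaging over $\bC_F$ is therefore the same as averaging $|\cC'\cap\cD_\cF(F)|$ over a uniform maximal chain $\cC'$ of $2^F$, which is $\lambda_{|F|}(\cD_\cF(F))$ (viewing $\cD_\cF(F)\subseteq 2^F$ with $|F|$ as ground size).

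Plugging in the hypothesis $\lambda_{|F|}(\cD_\cF(F))\le B$ gives
\[
\lambda_n(\cF)=\frac{1}{n!}\sum_{F\in\cF^*}|\bC_F|\,\lambda_{|F|}(\cD_\cF(F))\le \frac{B}{n!}\sum_{F\in\cF^*}|\bC_F|\le B,
\]
since $\sum_F|\bC_F|\le n!$ and $B\ge 0$. (The case $\cF=\emptyset$ is trivial, and if $\cF\ne\emptyset$ then $B\ge\lambda\ge 0$ automatically.)

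There is no real obstacle here. The only point needing care is the product structure of $\bC_F$: it is exactly what makes the lower parts uniformly distributed, and it is what justifies the identity stated before the observation.
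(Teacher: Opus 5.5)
Your proof is correct and is the paper's argument: the paper also uses the max-partition $\{\bC_F\}$, asserts $\frac{1}{|\bC_F|}\sum_{\cC\in\bC_F}|\cC\cap\cF|=\lambda_{|F|}(\cD_\cF(F))$, and concludes that $\lambda_n(\cF)$ is a weighted average of these quantities. You add two things the paper leaves implicit: a justification of that identity via the product structure of $\bC_F$, and the observation that chains missing $\cF$ (and empty classes $\bC_F$) make the sum only a sub-convex combination, so $B\ge 0$ is needed (automatic when $\cF\neq\emptyset$).
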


\begin{lem}[Griggs, Li, Lu, Lemma 3.2 in \cite{GLL}]\label{lubm}
    For any family $\cF\subseteq 2^{[n]}$, we have $|\cF|\le \lambda_n(\cF)\binom{n}{\lfloor n/2\rfloor}$.
\end{lem}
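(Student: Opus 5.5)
The plan is to compare the Lubell-mass with $|\cF|$ term by term, using only that the middle binomial coefficient is the largest one. By definition,
\[
\lambda_n(\cF)=\sum_{F\in \cF}\frac{1}{\binom{n}{|F|}},
\]
so it suffices to bound each summand from below by the same quantity $1/\binom{n}{\lfloor n/2\rfloor}$.

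The key (and essentially only) step is the classical unimodality fact $\binom{n}{i}\le \binom{n}{\lfloor n/2\rfloor}$ for every $0\le i\le n$. To verify it, I would note that
\[
\frac{\binom{n}{i+1}}{\binom{n}{i}}=\frac{n-i}{i+1}\ge 1 \quad\text{if and only if}\quad i\le \frac{n-1}{2}.
\]
Hence the sequence $\binom{n}{i}$ is nondecreasing for $i\le \lfloor n/2\rfloor$. By the symmetry $\binom{n}{i}=\binom{n}{n-i}$ it is nonincreasing afterwards, so its maximum is attained at $i=\lfloor n/2\rfloor$.

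Taking reciprocals, every $F\in\cF$ satisfies $\frac{1}{\binom{n}{|F|}}\ge \frac{1}{\binom{n}{\lfloor n/2\rfloor}}$. Summing over $F\in\cF$ gives
\[
\lambda_n(\cF)\ge \frac{|\cF|}{\binom{n}{\lfloor n/2\rfloor}},
\]
and multiplying through by $\binom{n}{\lfloor n/2\rfloor}$ yields the lemma. There is no real obstacle here; the only point needing care is the unimodality of the binomial coefficients, which is routine. I would not use the chain-averaging interpretation of $\lambda_n$, since the direct summand-wise comparison is shorter.
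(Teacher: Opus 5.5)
Your proof is correct and complete. The paper does not prove this lemma itself (it only cites it from Griggs, Li, and Lu), and the standard justification is exactly your termwise bound $\binom{n}{|F|}\le\binom{n}{\lfloor n/2\rfloor}$ summed over $F\in\cF$, with your unimodality-plus-symmetry check of the binomial coefficients handled correctly.
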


We will need the following folklore observation on minimum and average degree of graphs.

\begin{obs}\label{core}
    If $G$ is a graph with average degree $d$, then $G$ contains a subgraph $G'$ with minimum degree at least $d/2$.
\end{obs}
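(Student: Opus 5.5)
The statement is the folklore fact that a graph of average degree $d$ contains a subgraph of minimum degree at least $d/2$. I will prove it by repeatedly deleting low-degree vertices while tracking the ratio of edges to vertices. Let $G$ have $v$ vertices and $e=dv/2$ edges, so $e/v=d/2$. While the current graph $H$ has a vertex of degree less than $d/2$, delete that vertex together with its incident edges.

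The key invariant is that $e(H)-\frac{d}{2}v(H)$ never decreases. Initially this quantity equals $0$. Deleting a vertex of degree $<d/2$ reduces $v(H)$ by $1$, which raises the quantity by $d/2$. It reduces $e(H)$ by strictly less than $d/2$. Hence the quantity strictly increases at each step. So after any number of deletions we have $e(H)\ge \frac d2 v(H)$, with strict inequality once at least one deletion has happened (assume $d>0$).

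The process terminates because vertices are finite. It cannot delete every vertex: the empty graph has $e-\frac d2 v=0$, but the quantity would be strictly positive after at least one step. If no step happens, $G$ itself already has minimum degree $\ge d/2$. Hence the final graph $G'$ is nonempty and has every degree at least $d/2$. If $d=0$ the statement is trivial.

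There is no real obstacle here. The only care needed is to use strict inequality, so that the procedure cannot exhaust the whole graph.
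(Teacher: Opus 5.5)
Your proof is correct: the potential $e(H)-\frac{d}{2}v(H)$ starts at $0$ and strictly increases with each deletion, so the process can never reach the empty graph, and it stops at a nonempty subgraph of minimum degree at least $d/2$. The paper gives no proof of this observation and simply calls it folklore; your vertex-deletion argument is the standard proof and fully suffices for how the paper uses it on the bipartite graphs $B$.
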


In the proof of Theorem \ref{crownemb}, we will be looking for a copy of a special subposet with the help of which we will find the copy of $P_{2k-1}$ with the extra properties.

\begin{definition}
    Let $S^{k,\ell}$ denote the height 2 poset on $\ell\cdot k+1$ elements such that $H(S^{k,\ell})$ is a spider with $\ell$ legs each of length $k$. As the height is 2, there exist two such posets depending on whether the leaves are maximal or minimal elements. We define $S^{k,\ell}$ to be the one where the leaves are maximal elements.
\end{definition}

We move some of the calculations of the proof of Theorem \ref{crownemb} to here.

  \begin{lem}\label{lemma}
       For $k$ fixed, $1000k\le j\le 4\sqrt{n\ln n}$ and $n$ large enough, we have 
       \begin{enumerate}
           \item 
           \[\frac{\binom{n+2\sqrt{n\ln n}}{j}}{\binom{n-2\sqrt{n\ln n}}{j}}\le n^{20} \]
           \item 
           \[\sum_{i=j/k}^{j}\binom{kn^{1/3}j}{i}\binom{n/2+2\sqrt{n\ln n}}{j-i}=o\left(\frac{1}{\sqrt{n\ln n}}\binom{n/2-2\sqrt{n\ln n}}{j-22}\right)\]
           \item 
           \[
           \binom{n^{2/3}+2\sqrt{n\ln n}+kn^{1/3}{j}}{j}=o\left(\frac{1}{\sqrt{n\ln n}}\binom{n/2-2\sqrt{n\ln n}}{j-22}\right)
           \]
       \end{enumerate}
  \end{lem}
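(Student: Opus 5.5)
All three estimates are routine binomial computations. The plan is to write each binomial coefficient as a product and compare it factor by factor, with $a:=2\sqrt{n\ln n}$ throughout. The guiding principle is that every quantity in play, including $j\le 2a$, $kn^{1/3}j$ and the $\frac{1}{\sqrt{n\ln n}}$ factor, is at most polynomial in $n$. Meanwhile the main terms in (2) and (3) carry a factor $n^{-c\,j}$ or $n^{-c\,i}$ for some fixed $c>0$. The hypothesis $j\ge 1000k$ (hence $i\ge j/k\ge 1000$) makes this exponent large enough to absorb all polynomial losses.

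For (1), I would write
\[
\frac{\binom{n+a}{j}}{\binom{n-a}{j}}=\prod_{t=0}^{j-1}\frac{n+a-t}{n-a-t}\le \left(1+\frac{2a}{n-a-j}\right)^{j}\le \exp\left(\frac{2aj}{n-3a}\right).
\]
Since $2aj\le 4a^2=16\,n\ln n$, the right-hand side is at most $n^{17}\le n^{20}$ for large $n$. The same computation with $n/2$ in place of $n$ gives an auxiliary bound that I will reuse:
\[
\binom{n/2+a}{m}\le n^{C}\binom{n/2-a}{m}\quad\text{for all } m\le j,
\]
with an absolute constant $C$ (about $33$).

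For (2), I would bound each summand separately and then multiply by the at most $j\le n$ terms.
\begin{itemize}
\item Using $\binom{N}{i}\le (eN/i)^i$ and $i\ge j/k$ gives $\binom{kn^{1/3}j}{i}\le (ek^2n^{1/3})^{i}$.
\item Stepping the lower index of $\binom{n/2+a}{\cdot}$ down from $j-22$ to $j-i$ gives $\binom{n/2+a}{j-i}\le \binom{n/2+a}{j-22}\left(\frac{j}{n/2+a-j}\right)^{i-22}\le \binom{n/2+a}{j-22}(3j/n)^{i-22}$.
\item The auxiliary bound then replaces $\binom{n/2+a}{j-22}$ by $n^{C}\binom{n/2-a}{j-22}$.
\end{itemize}
Altogether each summand is at most
\[
n^{C}\,(n/3j)^{22}\,\bigl(3ek^2\,j\,n^{-2/3}\bigr)^{i}\binom{n/2-a}{j-22}.
\]
Since $j\le 2a$, the bracket is $n^{-1/6+o(1)}$. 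With $i\ge 1000$, the summand is therefore at most $n^{C+22-160}\binom{n/2-a}{j-22}$. Even after multiplying by $j$ terms and by $\sqrt{n\ln n}$, this is $o\bigl(\frac{1}{\sqrt{n\ln n}}\binom{n/2-a}{j-22}\bigr)$.

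For (3), set $M=n^{2/3}+a+kn^{1/3}j$. Then $M=O(n^{5/6}\sqrt{\ln n})$, so $M/n\le n^{-1/6+o(1)}$. I would bound the numerator and denominator separately:
\[
\binom{M}{j}\le (eM/j)^j,\qquad \binom{n/2-a}{j-22}\ge \left(\frac{n/2-a-j}{j}\right)^{j-22}\ge (n/3j)^{j-22}.
\]
Their ratio is at most $(3eM/n)^{j}(n/3j)^{22}\le n^{22-j/6+o(j)}$. Because $j\ge 1000k\ge 1000$, this is far below $1/\sqrt{n\ln n}$.

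The only delicate point is the bookkeeping in (2). One has to make sure that the three polynomial losses all stay bounded by a fixed power of $n$ independent of $j$: the $n^{C}$ from shifting $n/2+a$ to $n/2-a$, the $(n/j)^{22}$ from the index offset $22$, and the number of terms together with the $\sqrt{n\ln n}$ factor. Once that holds, the gain $n^{-i/6}$ with $i\ge 1000$ wins. I would first verify carefully the auxiliary comparison between $\binom{n/2+a}{\cdot}$ and $\binom{n/2-a}{\cdot}$, since everything else is a direct application of the standard bounds $(N/m)^m\le\binom{N}{m}\le (eN/m)^m$.
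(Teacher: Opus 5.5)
Your proof is correct and follows the paper's route: a factor-by-factor product comparison for (1), then the standard estimates $(N/\ell)^\ell\le\binom{N}{\ell}\le(eN/\ell)^\ell$ together with an index shift for (2) and (3), with $i\ge j/k\ge 1000$ absorbing every polynomial loss. Your bookkeeping is actually tighter than the paper's in two places. First, you derive the correct constant $C\approx 33$ for the comparison at scale $n/2$, whereas the paper simply reuses the $n^{20}$ of (1) there. Second, in (3) you keep the $1/j$ in both bounds, so the ratio becomes $(3eM/n)^j(n/3j)^{22}$; the paper's stated bounds $(n^{2/3})^j$ and $\bigl(\frac{n}{10\sqrt{n\ln n}}\bigr)^{j-22}$, the latter only about $n^{(j-22)/2}$, do not by themselves combine to give a small ratio.
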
 

  \begin{proof}Using $1+x\le e^x$, we obtain
    \begin{equation*}
\frac{\binom{n+2\sqrt{n\ln n}}{j}}{\binom{n-2\sqrt{n\ln n}}{j}}\le\left(\frac{n-2\sqrt{n\ln n}}{n-6\sqrt{n \ln n}}\right)^{4\sqrt{n\ln n}}\le e^{\frac{5\sqrt{n\ln n}}{n}\cdot 4\sqrt{n\ln n}}\le n^{20}       
    \end{equation*}
      which proves (1).

      Using (1), $\binom{n}{\ell}\le (\frac{ne}{\ell})^\ell$,  $\frac{\binom{n}{\ell}}{\binom{n}{\ell'}}\le (\frac{\ell'}{n})^{\ell'-\ell}$ for $\ell<\ell'$ and $i\ge j/k\ge 1000$, $j\le \sqrt{n\ln n}$, we obtain
      \[
      \frac{\binom{kn^{1/3}j}{i}\binom{n/2+2\sqrt{n\ln n}}{j-i}}{\binom{n/2-2\sqrt{n\ln n}}{j-22}}\le \frac{n^{20}\binom{kn^{1/3}j}{i}\binom{n/2+2\sqrt{n\ln n}}{j-i}}{\binom{n/2+2\sqrt{n\ln n}}{j-22}}\le \frac{n^{20}(ek^2n^{1/3})^i(2j)^{i-22}}{n^{i-22}}\le n^{42}\left(\frac{2ek^2j}{n^{2/3}}\right)^i=o\left(\frac{1}{n^2}\right).
      \]
    As there are at most $4\sqrt{n\ln n}$ terms in its left hand side, we obtain (2).

    Using $(\frac{n}{\ell})^\ell\le \binom{n}{\ell}\le (\frac{en}{\ell})^\ell$, we have $\binom{n^{2/3}+2\sqrt{n\ln n}+kn^{1/3}{j}}{j}\le (n^{2/3})^j$ and $\binom{n/2-2n\sqrt{n\ln n}}{j-22}\ge (\frac{n}{10\sqrt{n \ln n}})^{j-22}$. As $j\ge 1000k$, we have $(n^{2/3})^j<n^{0.99j-22}$ and (3) follows.
  \end{proof}

    After all these preparations, we are ready to prove Theorem \ref{crownemb}.

\begin{proof}[Proof of Theorem \ref{crownemb}]
    Let $\cF\subseteq \tilde{B}_n$ be a set family with $|\cF|= (1+\varepsilon)\binom{n}{\lfloor n/2\rfloor}$. For any $F\in \cF$ and $j$ positive integer, let us define \[S_j(F)=\{G\in \cF:  G\subseteq F, |G|=|F|-j\}.\] 
    Then we partition $\cF$ into $\cF_1\cup \cF_2\cup \cF_3$ with \[\cF_1=\left\{F\in \cF:\exists j: 1\le j \le 1000k, |S_j(F)|\ge \frac{\varepsilon}{10000k} \binom{|F|}{j}\right\},\] \[\cF_2=\left\{F\in \cF\setminus \cF_1: \exists j \ge  1000k+1,  |S_j(F)|\ge \binom{|F|}{j-22}\right\}, \hskip 0.7truecm \text{and}\ \hskip 0.7truecm\cF_3=\cF\setminus (\cF_1\cup\cF_2).\]

    \medskip

    We obtain the following bound on $\lambda(\cF_3)$ and thus, by Lemma \ref{lubm}, on $|\cF_3|$.  

    \begin{claim}\label{Mclaim}
        If $n$ is large enough, then $\lambda_n(\cF_3)< 1+\varepsilon/2$.
    \end{claim}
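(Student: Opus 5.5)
The plan is to bound the Lubell mass of $\cF_3$ chain by chain using the max-partition, and then apply Observation \ref{lubaver} to the family $\cF_3$. Consider the max-partition of $\bC_n$ according to $\cF_3$. Chains meeting no member of $\cF_3$ contribute $0$, so they only lower the average. It therefore suffices to show that for every $F\in \cF_3$ we have
\[
\lambda_{|F|}(\cD_{\cF_3}(F))<1+\varepsilon/2 .
\]

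Fix $F\in\cF_3$. We split $\cD_{\cF_3}(F)$ according to the size difference $j=|F|-|G|$. The set $F$ itself contributes $1$. The sets at distance $j$ form a subfamily of $S_j(F)$, so they contribute at most $|S_j(F)|/\binom{|F|}{j}$. Since $\cF\subseteq\tilde B_n$, every such $G$ satisfies $j\le 4\sqrt{n\ln n}$. Hence
\[
\lambda_{|F|}(\cD_{\cF_3}(F))\le 1+\sum_{j=1}^{1000k}\frac{|S_j(F)|}{\binom{|F|}{j}}+\sum_{j=1000k+1}^{4\sqrt{n\ln n}}\frac{|S_j(F)|}{\binom{|F|}{j}} .
\]

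The first sum is handled by $F\notin\cF_1$. For each $1\le j\le 1000k$ we have $|S_j(F)|<\frac{\varepsilon}{10000k}\binom{|F|}{j}$. Summing over the $1000k$ values of $j$ gives less than $\varepsilon/10$.

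The second sum is handled by $F\notin\cF_2$, which gives $|S_j(F)|<\binom{|F|}{j-22}$ for every $j\ge 1000k+1$. Each term is then at most
\[
\frac{\binom{|F|}{j-22}}{\binom{|F|}{j}}\le\left(\frac{j}{|F|-j}\right)^{22}.
\]
Here $|F|\ge n/2-2\sqrt{n\ln n}$ and $j\le 4\sqrt{n\ln n}$, so each term is at most $\left(\frac{10\sqrt{n\ln n}}{n}\right)^{22}=O\!\left(\left(\frac{\ln n}{n}\right)^{11}\right)$. There are at most $4\sqrt{n\ln n}$ terms, so the second sum is $o(1)$.

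Altogether $\lambda_{|F|}(\cD_{\cF_3}(F))<1+\varepsilon/10+o(1)<1+\varepsilon/2$ for $n$ large. Observation \ref{lubaver} then gives the claim.

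The only step needing care is the range $j\ge 1000k+1$. One must check that the binomial ratio stays polynomially small uniformly for all $j$ up to $4\sqrt{n\ln n}$, and that the exponent $22$ beats the $\sqrt{n\ln n}$ count of terms. Both hold with a large margin.
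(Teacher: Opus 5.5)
Your proposal is correct and is essentially the paper's proof. Like the paper, you bound $\lambda_{|F|}(\cD_{\cF_3}(F))$ by $\sum_{j}|S_j(F)|/\binom{|F|}{j}$, use $F\notin\cF_1$ for $1\le j\le 1000k$ and $F\notin\cF_2$ with the ratio $\binom{|F|}{j-22}/\binom{|F|}{j}$ for larger $j$, and conclude via Observation~\ref{lubaver}; your estimate $\bigl(j/(|F|-j)\bigr)^{22}$ for that ratio is, if anything, slightly more careful than the paper's.
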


    \begin{proof}[Proof of Claim]
        By definition, for any $F\in \cF_3$ 
        \[
        \lambda_{|F|}(\cD_{\cF_3}(F))\le \sum_{j=0}^{4\ln n\sqrt{n}}\frac{|S_j(F)|}{\binom{|F|}{j}}\le 1+1000k\cdot \frac{\varepsilon}{10000k}+4\ln n\sqrt{n}\cdot \left(\frac{4\ln n\sqrt{n}}{n}\right)^{22}\le 1+\varepsilon/2,
        \]
    if $n$ is large enough. The claim follows from Observation \ref{lubaver}. 
    \end{proof}

    By Lemma \ref{lubm}, $|\cF_3|\le (1+\varepsilon/2)\binom{n}{\lfloor n/2\rfloor}$, so either $\cF_1$ or $\cF_2$ has size at least $\frac{\varepsilon}{4}\binom{n}{\lfloor n/2\rfloor}$. On many occasions, we will see that a subfamily of $\cF_1$ or $\cF_2$ will have size $f(\varepsilon)\binom{n}{\lfloor n/2\rfloor}$ or $g(\varepsilon)\binom{n}{j}$. If it is not important, then we will not state what the precise function $f$ is, just write $\varepsilon',\varepsilon''$, etc.

    \medskip

    \textsc{Case I} $|\cF_1|\ge \frac{\varepsilon}{4}\binom{n}{\lfloor n/2\rfloor}$
    
    By definition of $\cF_1$, for any $F\in \cF_1$, there exists $1\le j(F)\le 1000k$ such that $|S_{j(F)}(F)|\ge \frac{\varepsilon}{10000k}\binom{|F|}{j}$. So for some $1\le j\le 1000k$, the family $\cF^j_1=\{F\in \cF_1:j(F)=j\}$ has size at least $\frac{\varepsilon}{4000k}\binom{n}{\lfloor n/2\rfloor}$. Consider the bipartite graph $B$ with parts $\cF$ and $\cF^j_1$ and $F\in \cF,F_1\in \cF^j_1$ joined by an edge if and only if $F\subseteq F_1$, $|F|=|F_1|-j$, and we say that this edge has color set $F_1\setminus F$. By definition of $\cF_1$, the number of edges in $B$ is at least $\varepsilon'\binom{n/2-2\ln n\sqrt{n}}{j}\binom{n}{\lfloor n/2\rfloor}$ and so the average degree of $B$ is at least $\varepsilon''\binom{n/2-2\ln n\sqrt{n}}{j}$. By Observation \ref{core}, $B$ contains a subgraph $B'$ with minimum degree at least $\varepsilon'''\binom{n/2-2\ln n\sqrt{n}}{j}$. 
    
    We plan to obtain a copy of $S^{k-2,\ell}$ in $\cF$ where $\ell=\ell(k,\varepsilon)$ is a large constant independent of $n$. If the element of $S^{k-2,\ell}$ corresponding to the center of the spider is a minimal element, then we start with an arbitrary set in the $\cF$ part of $B'$, if it is a maximal element, then we start with an arbitrary element of the $\cF_1$ part of $B'$. Then we proceed as follows: if some connected part of $S^{k-2,\ell}$ is already defined and we want to add a neighbor of $F$ or $F_1$, then we pick a neighbor such that the color set of the connecting edge should be disjoint with the color set of every previously defined edge of the spider. This is possible, as the union of all previous color sets has size at most $j k \ell$, and so the number of neighbors that do not satisfy this requirement is at most $jk\ell\binom{n/2+2\sqrt{n\ln n}}{j-1}$ which is smaller than the minimum degree $\varepsilon'''\binom{n/2-2\ln n\sqrt{n}}{j}$ if $n$ is large enough (here we use that in Case I, $j$ is a fixed constant at most $1000k$). By definition, this family of sets forms a weak copy of $S^{k-2,\ell}$. We claim that due to the disjointness of the color sets of the edges, they form a strong copy of $S^{k-2,\ell}$. Suppose first that $F\in \cF,F_1\in\cF_1$ are on the same leg of the spider, but are not consecutive elements. If $F$ is closer to the center of the spider, then the leg continues as $FF_1'F'$ and the color set of $F_1'F'$ is removed and never put back, so $F_1$ cannot contain $F$. Similarly, if $F_1$ is closer to the center and the leg continues as $F_1F'F_1'$, then the color set of $F'F_1'$ is added and never later removed, so $F_1$ cannot contain $F$. If $F,F_1$ are on different legs of the spider, then if the center $F_0$ is in the $\cF$ part, then the color set of $F_0F_1'$ is contained in $F$ where $F_1'$ is the first element of the spider after the center, and so $F_1$ cannot contain $F$. Finally, if $F_0$ is in the $\cF_1$ part, then the color set of $F_0F'$ is removed and is never put back, and so is not contained in $F_1$ which therefore cannot contain $F$. Here $F'$ is the first element of the leg of $F_1$. 

    Let $G$ denote the intersection of all sets of the spider defined above. Observe that $|G|\ge n/2-3\sqrt{n\ln n}$ if $n$ is large enough as every set in the spider can remove at most $1000k$ elements from the intersection. Let $F^1,F^2,\dots,F^\ell$ be the sets corresponding to the leaves of the spider and let $x_i$ be an element of $F^i$ from the color set of the edge incident to $F^i$. If we find $1\le a<b\le \ell$ and $y\in G$ such that $F^a$ and $F^b$ have neighbors $G^a\subseteq F^a\setminus \{y\}$ and $G^b\subseteq F^b\setminus \{y\}$ with $x_a\in G^a,x_b\in G^b$, then these two legs of the spider together with $G^a$ and $G^b$ form the desired copy of $P_{2k-1}$.
    
     The number of subsets of $F^a$ of size $|F^a|-j$ not containing $x_a$ is $\binom{|F^a|-1|}{j-1}$, so if $n$ is large enough, then the number of subsets of $F^a$ of size $|F^a|-j$  containing $x_a$ is at least $\varepsilon''''\binom{|F^a|}{j}$. Therefore, for any $F^a$ and $x_a$, there exist $\alpha(\varepsilon)n$ elements $y\in G$ such that there is an $(|F^a|-j)$-subset $G^i_y\in \cF$ with $x_i\in G^i_y,y\notin G^i_y$. Taking $\ell=\ell(\varepsilon)$ to be larger than $\lceil \alpha(\varepsilon)^{-1}\rceil$, we obtain a pair $x_a,x_b$ with a common $y$. This finishes the proof of this case.

    \medskip

    \textsc{Case II} $|\cF_2|\ge \frac{\varepsilon}{4}\binom{n}{\lfloor n/2\rfloor}$

    By definition of $\cF_2$, for any $F\in \cF_2$, there exists $1000k< j(F)\le 4\ln n\sqrt{n}$ such that $|S_{j(F)}(F)|\ge \binom{|F|}{j-22}$. So for some $1000k< j\le  4\sqrt{n\ln n}$, the family $\cF^j_2=\{F\in \cF_2:j(F)=j\}$ has size at least $\frac{\varepsilon}{ 4\sqrt{n\ln n}}\binom{n}{\lfloor n/2\rfloor}$. Consider the bipartite graph $B$ with parts $\cF$ and $\cF^j_2$ and $F\in \cF,F_2\in \cF^j_2$ joined by an edge if and only if $F\subseteq F_2,|F|=|F_2|-j$ and let $F_2\setminus F$ be the color set of this edge. By definition of $\cF_2$, the number of edges in $B$ is at least $\frac{\varepsilon}{ 4\sqrt{n\ln n}}\binom{n/2-2\sqrt{n\ln n}}{j-22}\binom{n}{\lfloor n/2\rfloor}$ and the average degree is at least $\frac{\varepsilon}{ 8\sqrt{n\ln n}}\binom{n/2-2\sqrt{n\ln n}}{j-22}$ in $B$. By Observation \ref{core}, $B$ contains a subgraph $B'$ with minimum degree at least $\frac{\varepsilon}{ 16\sqrt{n\ln n}}\binom{n/2-2n\sqrt{n\ln n}}{j-22}$.

    Our strategy is as in Case I, we want to find a spider in $B'$ that corresponds to an $S^{k-2,\ell}$ in $\cF$ but now $\ell=\ell(n)=n^{1/3}$. This time, we require a little less from the color sets of the edges used: we want that for any edge $FF_2$ out of the $j$ colors, less than $j/k$ should appear in color sets of previously defined edges. The total number of colors used is at most $kn^{1/3}j$. So the number of sets having not enough new elements is at most
    \[
    \sum_{i=j/k}^{j}\binom{kn^{1/3}j}{i}\binom{n/2+2\sqrt{n\ln n}}{j-i}
    \]    
which is, by Lemma \ref{lemma} (2), negligible compared to the minimum degree of $B'$. So it is possible to pick the edges of the spider as required. As in the previous case, the spider is a weak copy of $S^{k-2,\ell}$ by definition of $B$. We claim that it is a strong copy of $S^{k-2,\ell}$. Suppose $F$ is in the $\cF$-part of $B$ and $F_2$ is in the $\cF_2$ part of $B$ and they both belong to the spider, but $FF_2$ is not an edge of the spider. If $F_2$ is closer to the center on the same leg as $F$ or if they are on distinct legs, then there is a step after $F_2$ is defined when $j-j/k$ new colors are added to the leg of $F$, and later at most $j/k$ old colors are removed in each step and there are at most $k/2-1$ steps when we remove colors, so $F$ will contain at least $j/2$ new colors compared to $F_2$. Similarly, if $F$ is closer to the center of the spider with $F_2$ on the same leg, then after adding $F$, there is a step when $j-j/k$ colors are removed, and then colors are added in at most $k/2-1$ steps, at most $j/k$ old colors in each such step, so $F$ again will have at least $j/2$ colors not in $F_2$.

    Let again $F^1,F^2, \dots, F^\ell$ be the sets corresponding to the leaves of the $S^{k-2,\ell}$. By the argument above, for any $1\le a,b\le \ell$, if $C^a$ denotes the color set of the edge incident to $F^a$, then $|C^a\setminus F^b|\ge j/2$. The number of subsets of $F^a$ of size $|F^a|-j$ that contain more than $j/k$ elements of $C^a$ is \[\sum_{i=j/k}^j\binom{|C^a|}{i}\binom{|F^a\setminus C^a|}{j-i}\le 2\binom{j}{j/k}\binom{n/2+2\sqrt{n\ln n}}{j-j/k},\]
    which by the calculation of Lemma \ref{lemma} (2), has a lower order of magnitude compared to the minimum degree in $B$. So every $F^a$ has at least $\frac{\varepsilon'}{\sqrt{n\ln n}}\binom{n/2-2\sqrt{n\ln n}}{j-22}$ incident edges in $B$ with at most $j/k$ colors used in the spider. As in Case I, let $G$ denote the intersection of all sets in the spider. As the total number of colors used is at most $kn^{1/3}j$,  we obtain $|G|\ge n/2-kn^{1/3}j$. So for every $a$ there exists a set $Y^a\subset G$ of $n^{2/3}$ elements that appear as colors on some edge incident to $F^a$. Indeed, if not then there exists $B^a\subset G$ with $|B^a|\le n^{2/3}$ such that all colors appearing on some edge incident to $F^a$ are in $F^a\cup B^a$, and therefore the number of such edges is at most $\binom{kn^{/3}j+2\sqrt{n\ln n}+n^{2/3}}{j}$ which, by Lemma \ref{lemma} (3), is negligible compared $\frac{\varepsilon'}{\sqrt{n\ln n}}\binom{n/2-2\sqrt{n\ln n}}{j-22}$. This contradiction proves the existence of $Y^a$. As $\ell=n^{1/3}$ there exist $a,b$ and $y$ with $y\in Y^a\cap Y^b$. Thus there exist $G^a,G^b$ with $y\notin G^a\subseteq F^a,y\notin G^b\subseteq F^b$, $|C^a\cap G^a|,|C^b\cap G^b|\le j/k$. Then $G^a,G^b$ and the legs of $F^a$ and $F^b$ form the needed copy of $P_{2k-1}$.
\end{proof}

\bigskip

We now turn our attention to Theorem \ref{specialtree}. As the proof very closely follows the proof in \cite{BJ}, let us briefly sketch their approach to embed a tree poset $T$ of height $k$ into any family $\cF\subseteq \tilde{B}_n$ of size at least $(k-1+\varepsilon)\binom{n}{\lfloor n/2\rfloor}$. First, following \cite{B}, they define a sequence $T\subseteq T_1\supset T_2\supset \dots \supset T_\ell$ of tree posets such that $T_\ell$ is a chain and $T_i\setminus T_{i+1}$ is a chain interval (see the definition below). Then a sequence of $\cG_1\supset \cG_2\supset \dots \supset \cG_\ell$ of families of \textit{good} $k$-chains of $\cF$ is defined, where good approximately means extendable for any small obstacles. Finally, the chain $T_\ell$ is embedded into $\cF$ using a $k$-chain from $\cG_\ell$ and any $T_i\setminus T_{i+1}$ is embedded using a $k$-chain from $\cG_i$ as $\cG_i$ is good thus extendable with respect to $\cG_{i+1}$. For weak copies of $T$, the good property is not necessary, but when one looks for strong copies of $T$ one has to make sure that sets $F_t$ from newly used chains are not in relation with sets $F_{t'}$ of already used chains if $t,t'$ are incomparable in $T$. As the number of already embedded elements (the potential obstacles) is at most $|T|-1$, the goodness condition ensures that one can extend the embedding for a strong copy of $T$. In the proof of Theorem \ref{specialtree}, we will apply the same procedure, but with one extra obstacle and this new obstacle might belong to $\tilde{B}_n^T\setminus \tilde{B}_n$. So the change is not very substantial for this part of the proof, but we need to be a bit more careful, when defining the sequence $T_1\supset \dots \supset T_\ell$. For this reason, we do not include all details. The proof of Theorem \ref{goodT} is basically identical to that of Theorem 5.1 of \cite{BJ} with constants adjusted. We do include its proof as an appendix to the version \cite{Pvers} that we upload to the author's webpage.

\medskip

We start describing the preliminaries by addressing $T$. A poset $P$ is \textit{$k$-saturated} if $h(P)=k$ and all maximal chains have length $k$.

\begin{lem}[Bukh \cite{B}]\label{saturated}
    Any poset $P$ of height $h$ is a strong subposet of an $h$-saturated poset $P'$. Moreover, any tree poset $T$ of height $h$ is a strong subposet of an $h$-saturated tree poset $T'$.
\end{lem}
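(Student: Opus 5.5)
The plan is to prove Lemma \ref{saturated} by an explicit padding construction: attach new chains to the elements of $P$ that sit too low or too high, so that every maximal chain is stretched to length exactly $h$. For $p\in P$, let $d(p)$ be the number of elements of a longest chain of $P$ ending at $p$, and let $u(p)$ be the number of elements of a longest chain starting at $p$. Every chain through $p$ has at most $d(p)+u(p)-1\le h$ elements.

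For each minimal element $p$ of $P$ with $u(p)<h$, attach below $p$ a new chain $c_1<\dots<c_{h-u(p)}<p$. For each maximal element $q$ with $d(q)<h$, attach above $q$ a new chain $q<e_1<\dots<e_{h-d(q)}$. The poset $P'$ is the transitive closure of these relations. A new element below a minimal $p$ becomes comparable with an element $r$ of $P$ exactly when $p\le r$, and every element of the padding chain lies below all such $r$. The symmetric statement holds above maximal elements. Hence no new relation between two elements of $P$ is created, and $P$ is an induced, i.e.\ strong, subposet of $P'$. For trees we only glue pendant paths onto vertices of $H(P)$, and cover relations among original elements are unchanged. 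So $H(P')$ is $H(P)$ with hanging paths attached, and it is still a tree.

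The main obstacle is that this naive padding need not make $P'$ $h$-saturated. A maximal chain of $P$ may run from a minimal $p$ to a maximal $q$ and have fewer than $d(q)$ elements. After padding, the new chain through $p$ and $q$ then has $(h-u(p))+(\text{chain length})+(h-d(q))$ elements, which may be less than $h$ or greater than $h$. So I would not pad all at once. Instead I would induct on the number of maximal chains of length less than $h$ (or on $|P|$). I pick a non-saturated maximal chain $C$ and choose a cover relation $a\prec b$ in $C$ where the slack occurs, i.e.\ a place where extending $C$ does not increase the height. The natural choice is a pair with $d(b)>d(a)+1$. I then subdivide: add a fresh element that lies strictly between $a$ and $b$ in the poset order, but only inside a duplicated branch. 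Subdividing the edge $ab$ itself would lengthen other chains through $ab$ and could push the height above $h$, so instead I attach a new path. This keeps $P$ strong and, for trees, keeps $H$ a tree.

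Concretely, I will use the cleaner route of Bukh. First make $P$ graded with rank function $d$ by replacing every cover $a\prec b$ with $d(b)-d(a)>1$ by a path of length $d(b)-d(a)$ through new elements; only original elements need to stay strongly embedded. In the resulting poset $\widehat P$, every cover raises $d$ by exactly one. It remains to check two things. First, $P$ is still a strong subposet of $\widehat P$, because new elements on a path between $a$ and $b$ are comparable with exactly what $a$ and $b$ force. Second, $h(\widehat P)=h$, because $d$ increases by one along each cover. Then every maximal element $q$ with $d(q)<h$ gets a chain of $h-d(q)$ new elements on top, and every minimal element $p$ with $d(p)>1$ gets a chain of $d(p)-1$ new elements below. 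Now the rank runs through $1,\dots,h$ along every maximal chain, so $\widehat P$ is $h$-saturated. Subdividing edges and attaching pendant paths preserves the tree property, which gives the statement for $T$.
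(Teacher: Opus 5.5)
Your final construction is the paper's (Bukh's) construction: the canonical level $A_j$ is exactly $\{p: d(p)=j\}$, and like the paper you subdivide each cover $a\prec b$ with $d(b)-d(a)-1$ new elements and cap each maximal $q$ with $h-d(q)$ new elements; your verification of strongness, height, saturation and the tree property is correct. Two minor points: padding below minimal elements is vacuous, since every minimal element has $d(p)=1$. Also, your earlier worry that subdividing a cover could push the height above $h$ is unfounded, as your own final route shows.
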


\begin{proof}[Sketch of the proof.]
    Consider the canonical decomposition $\cup_{i=1}^hA_i=P$ that we obtain by setting $A_1$ to be the antichain of all minimal elements of $P$, and for any $1<j\le h$ we let $A_j$ be the set of minimal elements of $P\setminus \cup_{i=1}^{j-1}A_i$. 
    
    \begin{itemize}
        \item 
        If $q\in A_j$ is a maximal element of $P$, then add $h-j$ elements $q_1,q_2,\dots,q_{h-j}$ forming a chain $q\leqslant q_1\leqslant q_2\leqslant \dots \leqslant q_{h-j}$.
        \item 
        If $\overrightarrow{pq}$ is an arc of the oriented Hasse diagram of $P$ with $p\in A_i,q\in A_j$, then add $j-i-1$ elements $r_1,r_2,\dots,r_{j-i-1}$ forming a chain $p\leqslant r_1\leqslant\dots \leqslant r_{j-i-1}\leqslant q$.
    \end{itemize}
\end{proof}

If $p\leqslant_P q$, then the interval $[p,q]$ is $\{z:p\leqslant_P z\leqslant_P q\}$. A \textit{chain interval} is an interval that induces a chain in $P$. The \textit{poset distance} of $p,q$ in $P$ is the minimum $k$ such that there exist vertices $p=v_0,v_1,v_2,\dots, v_k=q$ such that $v_i$ and $v_{i+1}$ are comparable in $P$ for all $i=0,1,\dots,k-1$. The next lemma is from \cite{B}, the moreover part is not explicit there, but comes from its proof.

\begin{lem}[Bukh \cite{B}]\label{interval}
    If $T$ is a $k$-saturated tree poset that is not a chain, then there exist a leaf $v$ and a chain interval $I=[v,u]$ or $I=[u,v]$ such that $T\setminus I'$ is a $k$-saturated tree poset, where $I'=I\setminus \{u\}$. Moreover, $v$ is a leaf such that there exists $w\in T$ with $vw$ maximizing poset distance.
\end{lem}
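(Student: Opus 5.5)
The plan is to work with the rank structure of a saturated poset and to take $v$ to be an endpoint of a pair realizing the maximal poset distance. First, since $T$ is $k$-saturated, every maximal chain has exactly $k$ elements, so $T$ is graded. There is a rank function $r:T\to\{1,\dots,k\}$ with all minimal elements of rank $1$, all maximal elements of rank $k$, and $r(q)=r(p)+1$ whenever $q$ covers $p$. Conversely, a tree poset admitting such a rank function, in which every non-minimal element has a lower cover and every non-maximal element has an upper cover, is $k$-saturated. So after deleting a set $I'$ it suffices to check two things: $T\setminus I'$ is still a tree, and no element of $T\setminus I'$ has lost all its lower covers or all its upper covers, unless it has become of rank $1$ or $k$ respectively. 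This reduces the lemma to a local statement about where the pendant piece $I'$ is attached.

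Second, choose $v,w$ with the poset distance $d(v,w)$ maximal among all pairs. Then $v$ is a leaf of $H(T)$: otherwise some neighbor of $v$ in $H(T)$ lies in a component of $H(T)-v$ not containing $w$, and one checks it is at least as far from $w$; ties are broken by taking the farthest vertex of $H(T)$ in that component. By symmetry (turning $T$ upside down), assume $v$ is minimal, so $r(v)=1$. Walk upward from $v$ along $v=x_0\prec x_1\prec\cdots$, staying on the unique upper cover as long as the current vertex has degree $2$ in $H(T)$ and its other neighbor lies above it. Let $u=x_i$ be the first vertex where this stops, meaning $u$ has degree at least $3$ or its second neighbor is below it. Such a vertex exists before rank $k$ unless $T$ is a chain, because a maximal element of degree $2$ on this walk would have its other neighbor below it. Since $H(T)$ is a tree, any $z$ with $v\le z\le u$ lies on a directed path from $v$ to $u$, and the only such path is $x_0\cdots x_i$. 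Hence $I=[v,u]$ is a chain interval, and $I'=\{x_0,\dots,x_{i-1}\}$ is a pendant path of $H(T)$, so $T\setminus I'$ is a tree poset with the induced order. Its ranks are unchanged, and the elements of $I'$ have no neighbors outside $I$, so the only vertex whose covers change is $u$, which loses the lower cover $x_{i-1}$.

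Third, check saturation at $u$. If $u$ has another lower cover, nothing is lost and we are done; this covers the case where $u$ has degree $2$ with its other neighbor below. The remaining bad case, which I expect to be the main obstacle, is when $u$ has degree at least $3$, $x_{i-1}$ is its only lower cover, and all other neighbors $y_1,\dots,y_s$ ($s\ge 2$) are upper covers. Here I would use the maximality of $d(v,w)$ in the way Bukh does. Each branch $T_j$ hanging at $y_j$ must descend to rank $1$ somewhere, since $y_j$ lies on a maximal chain of length $k$; a chain through $y_j$ descending inside $T_j$ uses a lower cover of $y_j$ other than $u$. So $T_j$ contains a minimal leaf $v_j$ whose distance from $u$ is at least the distance from $v$ to $u$. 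The vertex $w$ lies in at most one branch, so some branch avoids $w$, and $v_j$ in that branch satisfies $d(v_j,w)\ge d(v,w)$. The alternative in which $y_j$'s only lower cover is $u$ is handled by passing to a different farthest leaf. Then I would rerun the construction from such a $v_j$ and argue, by induction on a suitable potential such as the distance of the attachment point $u$ from $w$, or by choosing among all farthest pairs one minimizing $|I|$, that the process terminates at a leaf whose stopping vertex $u$ does keep another lower cover. The bookkeeping of poset distances in this step, showing that the chosen replacement leaf still maximizes the distance to some $w$, is exactly where the moreover clause comes from and is the delicate part. The rest is the routine rank-function verification above.
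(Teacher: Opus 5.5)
Your first two steps are sound. You use the rank function, take $v$ from a pair $v,w$ of maximal poset distance, walk up to $u$, and reduce everything to whether $u$ keeps a lower cover. This matches the maximal-distance argument that the paper attributes to Bukh and does not reproduce. One correction: a maximizing $v$ need not itself be a leaf. You may only \emph{choose} it to be one, by extending the Hasse path from $w$ through $v$ away from $w$ until it reaches a leaf, which never decreases the number of monotone runs.

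The gap is in your third step. You never dispose of the bad configuration. Instead you propose an unspecified iteration (``rerun the construction'', ``a suitable potential'', ``choosing among all farthest pairs one minimizing $|I|$'') whose termination you do not prove, so the lemma is not established. Two further problems sit inside that step. The claim that each branch $T_j$ contains a minimal leaf reached by descending inside $T_j$ is unjustified, since a maximal chain through $y_j$ may go down through $u$. And the inequality $d(v_j,w)\ge d(v,w)$ is too weak to give anything.

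The missing idea is that the bad configuration cannot occur for a maximizing pair. First record the following fact. In a tree poset, two elements are comparable iff the unique path between them in $H(T)$ is monotone. Hence the poset distance $d(p,q)$ equals the number of maximal monotone runs of that path. Indeed, a comparability walk of length $m$ gives a walk in the tree with at most $m-1$ changes of direction, and cancelling backtracks never increases this number.

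Now take the bad case.
\begin{itemize}
\item $w\notin[v,u]$: otherwise $d(v,w)=1$, while $T$ not being a chain forces the maximum to be at least $2$.
\item So $w$ lies in the branch at some upper cover, say $y_1$. The $w$--$v$ path then ends with the descending run $\dots,y_1,u,x_{i-1},\dots,v$, which gives $d(w,v)=d(w,u)$.
\item For another upper cover $y_2\neq y_1$ of $u$, the $w$--$y_2$ path is the $w$--$u$ path followed by the ascending edge $uy_2$. Thus $d(w,y_2)=d(w,u)+1=d(w,v)+1$, contradicting the maximality of $d(v,w)$.
\end{itemize}
Hence $u$ always has a second lower cover and $I=[v,u]$ works. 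The moreover clause holds automatically, because $v$ was taken from a maximizing pair from the start. No iteration or potential function is needed.
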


We will need the following simple lemma on the possible size of unions and intersections of sets that are used when embedding $T$. These are the possible extra obstacles mentioned earlier.

\begin{lem}\label{tilde}
    Suppose $G_1,G_2,\dots,G_h \in \tilde{B}_n$ with $h \le |T|$ are such that their comparability graph is connected. Then $\cup_{i=1}^hG_i\in \tilde{B}_n^T$.
\end{lem}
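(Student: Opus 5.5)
The plan is a direct estimate of $\bigl|\,|\cup_{i=1}^h G_i|-n/2\bigr|$ through a spanning tree of the comparability graph, followed by the matching argument for the intersection.

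\emph{Step 1 (a spanning tree and a root).} Since the comparability graph on $G_1,\dots,G_h$ is connected, it has a spanning tree. Root it at $G_1$. For every $i$, the tree gives a path $G_1=H_0,H_1,\dots,H_r=G_i$ with $r\le h-1$, in which consecutive sets are comparable.

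\emph{Step 2 (bounding $|G_i\setminus G_1|$ along the path).} We claim that $|H_s\setminus H_0|\le 4s\sqrt{n\ln n}$, and prove it by induction on $s$.
\begin{itemize}
\item If $H_{s+1}\subseteq H_s$, then $H_{s+1}\setminus H_0\subseteq H_s\setminus H_0$, so the bound carries over.
\item If $H_s\subseteq H_{s+1}$, then $|H_{s+1}\setminus H_s|=|H_{s+1}|-|H_s|\le 4\sqrt{n\ln n}$, because both sets lie in $\tilde{B}_n$. This gives $|H_{s+1}\setminus H_0|\le |H_s\setminus H_0|+4\sqrt{n\ln n}$.
\end{itemize}
It follows that $|G_i\setminus G_1|\le 4(h-1)\sqrt{n\ln n}$ for every $i$.

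\emph{Step 3 (the union).} Writing $\cup_i G_i=G_1\cup\bigcup_i(G_i\setminus G_1)$ and using Step 2 crudely gives
\[|\cup_i G_i|\le n/2+2\sqrt{n\ln n}+4(h-1)^2\sqrt{n\ln n}.\]
This crude bound loses a factor of $h$, so it is not sharp enough. To fix this, apply Step 2 edge by edge in the spanning tree rather than path by path. Adding the sets in a breadth-first order, each new set $G$ is comparable to some already added $G'$. Then $|G\setminus G'|\le 4\sqrt{n\ln n}$, whether $G\subseteq G'$ (the difference is empty) or $G'\subseteq G$ (the size bound from $\tilde{B}_n$). So each new set enlarges the union by at most $4\sqrt{n\ln n}$, and
\[|\cup_i G_i|\le n/2+2\sqrt{n\ln n}+4(h-1)\sqrt{n\ln n}\le n/2+4|T|\sqrt{n\ln n},\]
since $h\le|T|$.

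The lower bound is immediate: $|\cup_i G_i|\ge |G_1|\ge n/2-2\sqrt{n\ln n}$. Hence $\cup_i G_i\in\tilde{B}_n^T$.

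\emph{Step 4 (the intersection).} The intersection, which the paper also needs as an obstacle, follows by the symmetric argument. Alternatively, take complements: complementation preserves comparability and maps $\tilde{B}_n$ to itself, so the union bound for the complements gives the intersection bound.

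The only delicate point is avoiding the quadratic loss in Step 3; the breadth-first, edge-by-edge accounting handles it.
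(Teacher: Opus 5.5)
Your proof is correct and, once the detour through Steps 1--2 and the crude bound is discarded, it is exactly the paper's argument: order the sets so that each new one is comparable to an earlier one, observe that each step enlarges the union by at most $4\sqrt{n\ln n}$, and conclude $|\cup_i G_i|\le n/2+2\sqrt{n\ln n}+4(h-1)\sqrt{n\ln n}\le n/2+4|T|\sqrt{n\ln n}$. Your explicit lower bound $|\cup_i G_i|\ge |G_1|$ fills in a step the paper leaves implicit, and the complementation remark in Step 4 is correct but not needed for the statement as given.
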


\begin{proof}
    We can assume that the indexing of the $G_i$s is such that for any $1\le j \le h$ the comparability graph of $G_1,G_2, \dots,G_j$ is connected. Therefore there exists $i<j$ such that $G_i,G_j$ are comparable. If $G_j \subseteq G_i$, then $G_j$ does not add new element to the union of the $G_i$s, while if $G_i\subseteq G_j$, then $G_j$ adds at most $|G_j\setminus G_i|\le 4\sqrt{n\ln n}$ elements to the union of the $G_i$. Thus $|\cup_{i=1}^hG_i|\le n/2+2\sqrt{n \ln n}+4(h-1)\sqrt{n\ln n}\le n/2+4|T|\sqrt{n \ln n}$ as claimed.
\end{proof}

The following definitions are all from \cite{BJ}.
For a family $\cG \subseteq 2^{[n]}$ of sets, we define $\cD(\cG)=\cup_{G\in \cG}\cD(G)$ and $\cU(\cG)=\cup_{G\in \cG}\cU(G)$. For a set $G \in \tilde{B}_n$ and a family $\cS \subseteq \tilde{B}_n^T$ with $\cU(G)\cap \cS=\emptyset$ (in \cite{BJ} the family of obstacles satisfied $\cS\subseteq \tilde{B}_n$, but for the proof of Theorem \ref{specialtree}, we will have to allow $\cS\subseteq \tilde{B}_n^T$), we define the \textit{forbidden neighborhood below and above $G$} as
\[
\cD^*(G,\cS)=[(\cD(G)\setminus \{G\})\cap (\cU(\cS)\cup \cD(\cS))]\cap \tilde{B}_n
\]
and
\[
\cU^*(G,\cS)=[(\cU(G)\setminus \{G\})\cap (\cU(\cS)\cup \cD(\cS))]\cap \tilde{B}_n.
\]
Let $\mathbf{C}_n$ denote the set of all $n!$ maximal chains in $[n]$.  A pair $(\cC,\cQ)$ is a \textit{$k$-marked chain} with markers in $\cF$ if $\cC\in \bC_n$, $\cQ\subseteq \cC\cap \cF$ and $|\cQ|=k$. For a set $\cL$ of $k$-marked chains, a set $G$, and an integer $1 \le d \le k$, we write $\cL(G,d)=\{(\cC,\cQ)\in \cL: G ~\text{is the $d$th member of $\cQ$ from top}\}$.

\medskip

The next definitions are a little altered compared to their original version in \cite{BJ}. There, the witness family $\cS$ was from $\tilde{B}_n$, while here it is from $\tilde{B}_n^T$. This is because our additional obstacle is a union of some of the sets of a copy of $T$ in $\tilde{B}_n^T$, and then Lemma \ref{tilde} guarantees that this new obstacle lies within $\tilde{B}_n^T$.

\smallskip

A set $G\in \tilde{B}_n$ is \textit{$(d,T)$-lower bad relative} to $\cL$ if there exists a family $\cS\subseteq \tilde{B}_n^T$ with $\cU(G)\cap \cS=\emptyset$ and $|\cS|\le |T|$ such that 
\[
\cL(G,d)\neq \emptyset \hskip 1truecm \text{and}\hskip 0.5truecm \forall (\cC,\cQ)\in \cL(G,d), \hskip 0.3truecm \cQ\cap \cD^*(G,\cS)\neq \emptyset.
\]
Similarly, $G\in \tilde{B}_n$ is \textit{$(d,T)$-upper bad relative to $\cL$ }if there exists a family $\cS\subseteq \tilde{B}_n^T$ with $\cD(G)\cap \cS=\emptyset$ and $|\cS|\le |T|$ such that 
\[
\cL(G,d)\neq \emptyset \hskip 1truecm \text{and}\hskip 0.5truecm \forall (\cC,\cQ)\in \cL(G,d), \hskip 0.3truecm \cQ\cap \cU^*(G,\cS)\neq \emptyset.
\]
The family $\cS$ in the above definitions is called a \textit{$(d,T)$-lower (upper) witness} of $G$.

We say
that $G$ is \textit{$(d,T)$-lower-bad relative to $\cC$ and $\cL$} if $G$ is $(d,T)$-lower-bad relative to $\cL$ and there exists at
least one $\cQ$ such that $(\cC,\cQ) \in \cL(G, d)$. We say that $G$ is \textit{$(d,T)$-upper-bad relative to $\cC$ and $\cL$} if $G$ is
\textit{$(d,T)$-upper-bad relative to $\cL$} and there exists at least one $\cQ$ such that $(\cC,\cQ) \in \cL(G, d)$. A $k$-marked
chain $(\cC,\cQ)$ is\textit{ good relative to $\cL$} if $\cQ$ does not contain a vertex $G$ that is either $(d,T)$-lower-bad or
$(d,T)$-upper-bad relative to $\cC$ and $\cL$ for any $1\le d\le k$. 

\begin{prop}[Equivalent of Proposition 4.1 in \cite{BJ}]\label{güdT}
    Let $(\cC,\cQ)$ be a member of $\cL$ that is good relative to $\cL$, and let $G\in \cQ$. Suppose $G$ is the $d$th set of $\cQ$. Then for any family $\cS$ of at most $|T|$ sets of $\tilde{B}^T_n$, where $\cS \cap \cU(G) = \emptyset$, there exists a member $(\cC, \cQ) \in \cL(G, d)$ such that $\cC$ is disjoint from $\cD^*(G, \cS)$. For any family $\cS$ of at most $|T|$ sets of $\tilde{B}^T_n$, where $\cS \cap \cD(G) = \emptyset$, there exists a member $(\cC, \cQ) \in \cL(G, d)$ such that $\cC$ is disjoint from $\cU^*(G, \cS)$.
\end{prop}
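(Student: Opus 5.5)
The plan is to read the proposition off directly from the definition of goodness, by contraposition. I treat only the lower half; the upper half is the same argument with $\cU$, $\cD$ swapped, and $\cD^*$ replaced by $\cU^*$.

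The one point that needs care is what ``$\cC$ is disjoint from $\cD^*(G,\cS)$'' can mean. Badness, and hence goodness, is defined only through the marker set $\cQ$. So the definitions say nothing about unmarked sets of $\cC$ lying in $\cD^*(G,\cS)$. Moreover, the literal chain-level version cannot hold in general: a maximal chain through $G$ must pass through some set of each size between $|G|$ and the bottom of $\tilde{B}_n$. If $\cS$ contains several proper subsets of $G$, every such chain may be forced to meet $\cU(\cS)\cup\cD(\cS)$ inside $\tilde{B}_n$. I therefore read the conclusion as it is used in the embedding step of \cite{BJ}: the marked chain $(\cC',\cQ')\in\cL(G,d)$ meets $\cD^*(G,\cS)$ in none of its markers, i.e.\ $\cQ'\cap\cD^*(G,\cS)=\emptyset$. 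This matches the wording of Proposition~4.1 in \cite{BJ}, which this proposition reproduces with $\tilde{B}_n$ replaced by $\tilde{B}_n^T$. I do not have an argument for the stronger reading in which every set of $\cC'$ avoids $\cD^*(G,\cS)$.

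The steps, in order, are as follows.
\begin{enumerate}
\item Since $(\cC,\cQ)\in\cL(G,d)$, we have $\cL(G,d)\neq\emptyset$, and $G$ is $(d,T)$-lower-bad relative to $\cC$ and $\cL$ exactly when it is $(d,T)$-lower-bad relative to $\cL$.
\item Goodness of $(\cC,\cQ)$ says that $G$ is not $(d,T)$-lower-bad relative to $\cC$ and $\cL$, hence not $(d,T)$-lower-bad relative to $\cL$.
\item Fix $\cS\subseteq\tilde{B}_n^T$ with $|\cS|\le|T|$ and $\cS\cap\cU(G)=\emptyset$. This $\cS$ satisfies every requirement on a lower witness; this is exactly why the witnesses were allowed to come from $\tilde{B}_n^T$ rather than $\tilde{B}_n$. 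Since $G$ is not lower-bad, $\cS$ is not a witness, and as $\cL(G,d)\neq\emptyset$ the universal clause must fail.
\item Hence some $(\cC',\cQ')\in\cL(G,d)$ has $\cQ'\cap\cD^*(G,\cS)=\emptyset$.
\end{enumerate}

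The main obstacle is not the logic, which is immediate, but checking that the altered definitions line up. Three things need verifying: the witness class in the definition of badness is exactly the class of $\cS$ quantified over in the proposition, namely at most $|T|$ sets of $\tilde{B}_n^T$ with the same disjointness condition; $\cD^*$ and $\cU^*$ are intersected with $\tilde{B}_n$ on both sides; and $d$ is the position of $G$ in $\cQ$ counted from the top, as in $\cL(G,d)$. Once these match, the proposition is a restatement of the definition of a good marked chain.
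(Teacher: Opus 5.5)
Your argument is the paper's own: $(\cC,\cQ)\in\cL(G,d)$ is good, so $G$ is neither $(d,T)$-lower-bad nor $(d,T)$-upper-bad relative to $\cL$. Hence no admissible $\cS$ is a witness, which yields a member of $\cL(G,d)$ whose markers avoid $\cD^*(G,\cS)$ (resp.\ $\cU^*(G,\cS)$). Your marker-level reading $\cQ'\cap\cD^*(G,\cS)=\emptyset$ is what this argument gives and all that the proof of Theorem~\ref{specialtree} uses; only your stated reason the chain-level version fails is off (for large $n$, at most $|T|$ obstacles from $\tilde{B}_n^T$ never block every maximal chain through $G$), since the real obstruction is that all chains in $\cL(G,d)$ may pass through an unmarked set of $\cD^*(G,\cS)$.
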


\begin{proof}
    By our assumptions $(\cC,\cQ) \in \cL(G, d)$ and $G$ is not $(d,T)$-lower-bad or $(d,T)$-upper-bad
relative to $\cL$; otherwise $G$ would be either $(d,T)$-lower-bad or $(d,T)$-upper-bad relative to $\cC$ and $\cL$, contradicting $(\cC,Q)$ being good relative to $\cL$. So, there is no $(d,T)$-lower witness of $G$ or $(d,T)$-upper witness of $G$ of size at most $|T|$ and the claim follows.
\end{proof}

As mentioned before, we do not present the proof of the next theorem here only in the version \cite{Pvers} uploaded to the author's website.

\begin{thm}[Equivalent of Theorem 5.1. in \cite{BJ}]\label{goodT}\
Let $k,t\ge 2$ be integers and let $\varepsilon$ be a fixed positive real. 
Suppose that $\cF \subseteq \tilde{B}_n$ is a family with $|\cF| \ge (k - 1 +\varepsilon )\binom{n}{\lfloor n/2\rfloor}$. For each $\cC \in \bC_n$, let $Y (\cC)$ denote the set of members of $\cF$ contained in $\cC$. If $n$ is large enough with respect to $k,t$, and $\varepsilon$, then there exist functions $X_1,\dots,X_t$ from $\cC_n$ to
$2^{\cF}$ satisfying the following.
    \begin{enumerate}
        \item 
        For all $\cC \in \bC_n$, $X_1(\cC) = Y (\cC)$.
        \item
For all $1 \le i \le t-1$ and $\cC \in \bC_n$, $X_{i+1}(\cC) \subseteq X_i(\cC)$, and if $X_{i+1}(\cC) \neq \emptyset$ then
\[
\frac{|X_{i+1}(\cC)|}{|X_i(\cC)|}\ge  1 -\frac{1}{4kt}.
\]
    \item 
    For all $1\le i \le t$, the family of $k$-marked chains $\cL_i$ with markers in $\cF$, defined by $\cL_i =\{(\cC,Q) : \cC \in \bC_n, Q \in \binom{X_i(\cC)}{k}\}$, satisfies
\[
|\cL_i|\ge \varepsilon  (n!/k)(1 - \frac{i}{2t}).
\]
\item 
For all $1\le i \le  t -1$, every member of $\cL_{i+1}$ is good relative to $\cL_i$. 
    \end{enumerate}
\end{thm}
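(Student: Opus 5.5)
We follow the proof of Theorem 5.1 in \cite{BJ}, which itself adapts the chain-marking scheme of \cite{B}. The plan is to build $X_1,\dots,X_t$ inductively, starting from $X_1(\cC)=Y(\cC)$. Given $X_i$, we form $\cL_i$. We then call a set $G$ \emph{$i$-bad} if it is $(d,T)$-lower-bad or $(d,T)$-upper-bad relative to $\cL_i$ for some $d$. We define $X_{i+1}(\cC)$ by deleting from $X_i(\cC)$ every $G$ that is $(d,T)$-bad relative to $\cC$ and $\cL_i$. If too large a fraction of $X_i(\cC)$ would be deleted (more than $\frac{1}{4kt}$ of it), we set $X_{i+1}(\cC)=\emptyset$. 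Properties (1) and (2) hold by construction.

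For (4), a member $(\cC,\cQ)$ of $\cL_{i+1}$ has $\cQ\subseteq X_{i+1}(\cC)$, so by construction no marker of $\cQ$ is bad relative to $\cC$ and $\cL_i$. There is a subtlety: goodness relative to $\cL_i$ involves all $d$, and deleting a marker that is bad for one position $d$ must not create new bad markers. Since $\cL_{i+1}$ is only required to be good relative to $\cL_i$, not relative to itself, the single deletion round per index suffices.

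The core of the argument is (3), a counting bound showing that few marked chains are lost at each step. Fix $d$ and a lower-bad set $G$ with witness $\cS$, where $|\cS|\le|T|$ and $\cS\subseteq\tilde B_n^T$. Every $(\cC,\cQ)\in\cL_i(G,d)$ then has a marker in $\cD^*(G,\cS)$, so its chain passes, strictly below $G$, through a set comparable to some $S\in\cS$. Since $S\not\supseteq G$, the chains through $G$ whose part below $G$ meets $\cU(S)\cup\cD(S)$ inside $\tilde B_n$ form a small fraction. Heuristically, a random chain below $G$ enters $\cD(S)$ only after discarding all elements of $G\setminus S$, and the sets we pass through are confined to $\tilde B_n$. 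This makes the probability $O(|T|\sqrt{\ln n}/\sqrt n)$ per element of $\cS$; the same bound holds for $\cU(S)$ with $S\not\subseteq G$. Enlarging the obstacle range from $\tilde B_n$ to $\tilde B^T_n$ only changes this by a constant factor, since $|S|$ still differs from $n/2$ by at most $O(|T|\sqrt{n\ln n})$, while the bad event requires hitting a set of $\tilde B_n$ comparable to $S$.

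Next we double count chains. Following \cite{BJ}, the number of pairs $(\cC,G)$ with $G$ bad relative to $\cC$ and $\cL_i$ is at most $o(1)\cdot n!$ times the maximum marker count. Markov's inequality then shows that the chains $\cC$ with $X_{i+1}(\cC)=\emptyset$, together with the markers deleted, cost at most $\varepsilon\frac{n!}{k}\cdot\frac{1}{2t}$ marked chains per step. Property (3) then follows by induction from $|\cL_1|\ge\varepsilon n!/k$. This base case holds because $|\cF|\ge(k-1+\varepsilon)\binom{n}{\lfloor n/2\rfloor}$, so by Lemma \ref{lubm} the Lubell mass is at least $k-1+\varepsilon$. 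Hence the average of $\binom{|Y(\cC)|}{k}$, which dominates $|Y(\cC)|-(k-1)$, is at least $\varepsilon$, up to the $1/k$ normalization.

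The main obstacle is the probabilistic estimate for $\cD^*(G,\cS)$ with the enlarged obstacle range $\tilde B^T_n$. Concretely, we must check that a random maximal chain through $G$ that meets an $S$-comparable set inside $\tilde B_n$ below $G$ is rare, uniformly in $|S|$ over $\tilde B^T_n$. We would first try the argument of \cite{BJ}: the chain must delete every element of $G\setminus S$ within the first $4\sqrt{n\ln n}$ steps below $G$, while $|G\setminus S|\ge1$. We would then replace that argument's constants by ones depending on $|T|$, choosing $n_0$ large in terms of $k,t,|T|,\varepsilon$.
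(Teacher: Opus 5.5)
Your overall scheme matches the Boehnlein--Jiang argument that the paper defers to: delete from $X_i(\cC)$ every marker that is $(d,T)$-bad relative to $\cC$ and $\cL_i$, and empty $X_{i+1}(\cC)$ when more than a $\frac{1}{4kt}$ fraction would go. Your treatment of (1), (2) and (4) is fine. The gap is in (3).

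\textbf{Why your accounting for (3) fails.} You aim for an additive loss of at most $\varepsilon\frac{n!}{k}\cdot\frac{1}{2t}$ marked chains per step, deduced via Markov from a bound $o(1)\cdot n!\cdot M$ on bad pairs $(\cC,G)$, where $M$ is the maximum marker count. This does not follow.
\begin{itemize}
\item Removing one marker from a chain with $m$ markers destroys $\binom{m-1}{k-1}$ members of $\cL_i$, and emptying the chain destroys $\binom{m}{k}$.
\item Inside $\tilde{B}_n$ one can have $m=\Theta(\sqrt{n\ln n})$, while the fraction you can actually prove is only $O(\sqrt{\ln n/n})$, up to factors in $k,t,|T|$. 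Even $o(1)\cdot M$ is then of order $\ln n$, not small.
\item For large families, e.g.\ $\cF=\tilde{B}_n$, $|\cL_i|$ itself is of order $n!(n\ln n)^{k/2}$. So no additive per-step bound of order $\varepsilon n!$ can come out of a relative estimate.
\end{itemize}
The invariant has to be linear in the markers.

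\textbf{How to repair it.} Fix, for each $(G,d)$, one lower and one upper witness. If $G$ is removed from $X_i(\cC)$, then some $(\cC,\cQ)\in\cL_i(G,d)$ exists, and every such pair has a marker in $\cD^*(G,\cS)$ (or $\cU^*(G,\cS)$). Hence the chain $\cC$ itself meets that set. So $G$ is removed from at most $2kq\,|G|!(n-|G|)!$ chains. Here $q=O(|T|^2\sqrt{\ln n/n})$ bounds the probability that a random maximal chain through $G$ meets $\cD^*(G,\cS)$ or $\cU^*(G,\cS)$.

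Summing over $G\in\cF$, the number of bad pairs at step $i$ is at most $2kq\,n!\,\lambda_n(\cF)$, a vanishing fraction of the total marker mass $\sum_{\cC}|Y(\cC)|=n!\lambda_n(\cF)$. By Markov, emptied chains carry less than $4kt$ times this. Hence $\sum_{\cC}|X_{i+1}(\cC)|\ge\sum_{\cC}|X_i(\cC)|-(4kt+1)\,2kq\,n!\lambda_n(\cF)$, which gives $\sum_{\cC}|X_i(\cC)|\ge(1-\delta)\,n!\lambda_n(\cF)$ with $\delta=O(k^2t^2|T|^2\sqrt{\ln n/n})$.

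Only now pass to marked chains, using $\binom{m}{k}\ge m-k+1$, which holds for all $m\ge 0$:
$|\cL_i|\ge n!((1-\delta)\lambda_n(\cF)-(k-1))\ge n!((1-\delta)(k-1+\varepsilon)-(k-1))\ge \varepsilon n!/2$.
This uses $\lambda_n(\cF)\ge k-1+\varepsilon$ from Lemma \ref{lubm} and monotonicity in $\lambda_n(\cF)$, and it yields (3).

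\textbf{A smaller omission in the probability estimate.} For lower badness you only treat the chain below $G$ entering $\cD(S)$. Its probability, at most $4\sqrt{n\ln n}/|G|$, does not depend on $|S|$ at all. The chain below $G$ can also meet $\cU(S)$ when $S\subsetneq G$. This happens if and only if the first element removed from $G$ lies in $G\setminus S$, with probability $(|G|-|S|)/|G|\le(4|T|+2)\sqrt{n\ln n}/(n/2-2\sqrt{n\ln n})$. This case, together with its mirror $G\subsetneq S$ for upper badness, is exactly where allowing $\cS\subseteq\tilde{B}_n^T$ rather than $\tilde{B}_n$ enters.
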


With all the above preparation, we are ready to prove Theorem \ref{specialtree}.

\begin{proof}[Proof of Theorem \ref{specialtree}] Let $T$ be a tree poset of height $k+1$ such that $m$ is a maximal element of $T$ with all chains of length $k+1$ in $T$ containing $m$. Let $m_1,m_2,\dots,m_a$ denote the neighbors of $m$ in $H(T)$. Let $T_0$ be the tree obtained from $T$ by reversing all arcs $\overrightarrow{m_im}$ in $\overrightarrow{H}(T)$ and adding vertices $v_1,v_2,\dots,v_{k-2}$ forming a chain $v_1\leqslant v_2\leqslant \dots \leqslant v_{k-2}\leqslant m$. Clearly, $h(T_0)=k$ and $T\setminus \{m\}=T_0\setminus \{m,v_1,v_2,\dots,v_{k-2}\}$. Let $T_0'$ be the $k$-saturated tree poset obtained from $T_0$ by Lemma \ref{saturated}. Observe that for any pair $p,q\in \cD:=\cup_{i=1}^a\cD_{T_0'}(m_i)$ the poset distance in $T_0'$ is at most 4, and if $a\ge 2$, then for any $v\in T_0'\setminus \cD$ there exists $p\in \cD$ such that their poset distance is larger than 4. If $a=1$, then the poset distance of pairs in $\cD$ is at most 2. In both cases, one can apply Lemma \ref{interval} to obtain a sequence $T_0',T_1',\dots,T_\ell'$ such that
\begin{itemize}
    \item 
    all $T_i'$ are $k$-saturated,
    \item 
    $T'_{i-1}\setminus T'_i$ is $[u_i,v_i]\setminus \{u_i\}$ or $[u_i,v_i]\setminus \{v_i\}$ where $[u_i,v_i]$ is a chain interval, 
    \item 
    $T'_\ell$ is a chain, 
    \item 
    and $T'_{i_0}=\cD$ for some $i_0$.
\end{itemize} 
 Let $\cF\subseteq \tilde{B}_n$ be a family of size at least $(k-1+\varepsilon)\binom{n}{\lfloor n/2\rfloor}$. Applying Theorem \ref{goodT}, we obtain a sequence of nested families of $k$-marked chains $\cL_0,\cL_1,\dots,\cL_\ell$. By property (3) of Theorem \ref{goodT}, $\cL_\ell$ is non-empty, so taking the $\cQ_\ell$-part of any $k$-marked chain $(\cC_\ell,\cQ_\ell)\in \cL_\ell$ embeds $T'_\ell$ into $\tilde{B}_n$ and for any $Q\in \cQ$ at the $d$-level of $T'_\ell$ we have $\cL_\ell(Q,d)\neq \emptyset$. As in \cite{BJ}, we proceed by backward induction to embed $T'_i$ for all $i<\ell$ with the extra conditions that if $Q_i$ plays the role of an element of $T'_i$ at level $d$ from top, then $\cL_i(Q_i,d)\neq \emptyset$ and also that if $p\in T'_0\setminus \cD$, then $G_p$ is not comparable to $G=\cup_{j=1}^aG_{m_j}$. Note that if $p\in T'_0\setminus \cD$, then $p\in T'_i\setminus T'_{i+1}$ for some $i<i_0$ and so $m_1,m_2,\dots,m_a$ are already embedded. Suppose we have managed to do this for $T'_{i+1},\dots,T'_\ell$ and let $\cG_j\subseteq \cF$ be the copy of $T'_j$ obtained in the process. We know $T'_{i+1}=T'_i\setminus (I_{i+1}\setminus u_{i+1})$ where $I_{i+1}=[u_{i+1},v_{i+1}]$ or $I_{i+1}=[v_{i+1},u_{i+1}]$ $v_{i+1}$ being at level $1$ or $k$ from top. Without loss of generality, we may assume the former case and assume $u_{+1}i$ is at level $d$ from top. Then by the inductive hypothesis $\cL_{i+1}(Q_{i+1},d)\neq \emptyset$ for the set $Q_{i+1}\in \cF$ playing the role of $u_{i+1}$. If $i\ge i_0$, then let $\cS_i=\cG_{i+1}\setminus \cD_{\cG_{i+1}}(Q_{i+1})$, while if $i<i_0$, then set $\cS_i=\cG_{i+1}\setminus \cD_{\cG_{i+1}}(Q_{i+1})\cup \{G\}$. As $T'_0\setminus T'_1\neq \emptyset$, we have $|\cS_i|\le |T'_0|=t$ in both cases. By Proposition \ref{güdT}, there exists a member $(\cC_i,\cQ_{i+1})\in \cL_{i+1}(Q_{i+1},d)$ with $\cC_i$ being disjoint with $\cU^*(Q_{i+1},\cS_i)$. We can embed $I_{i+1}\setminus \{u_{i+1}\}$ with the part of $\cQ_i$ below $Q$. As these newly added sets are not in the forbidden neighborhood $\cU^*(Q_{i+1},\cS_i)$, together with $\cG_{i+1}$, they form a copy $\cG_i$ of $T'_i$ such that the extra condition of not being comparable to $G$ hold. Also, the $\cL_i$s are nested, $(\cC_i,\cQ_{i+1})$ shows that the other extra condition holds.
\end{proof}

\subsection{Posets not containing  weak copies of $2C_2$}

\begin{proof}[Proof of Proposition \ref{noC2}]
For positive integers $a<b$, we write $[a,b]=\{a,a+1,\dots,b\}$ and we use $[b]=[1,b]$. For the lower bound of (1), consider the following construction: for $j$ with $2 \le j \le s-1$ take the chain $\cC_j=\{\{j\},[j,j+1],[j,j+2],\dots,[j,j-3],[j,j-2]\}$, where $[a,b]$ with $b<a$ denotes the set $[a,n]\cup [b]$. Also, let $\cC_1$ be the chain $\{[1],[2],\dots,[s-2],[s-2]\cup \{s\},[s-2]\cup[s,s+1],\dots,[s-2]\cup [s,n]\}$. Color all sets in $\{\emptyset,[n]\}\cup\bigcup_{i=1}^{s-1}\cC_i$ with distinct colors and color all remaining sets with the color of $[n]$. This coloring uses $(s-1)(n-1)+2$ colors and does not admit a strong rainbow copy of $\wedge_s$ as $\emptyset$ is not a member of any strong copy. Also, by construction, any set $C\in \cC_i$ does not contain any set $C'\in \cC_{i-1}$ and any set $C\in \cC_1$ does not contain any set $C'\in \cC_{s-1}$ and so the maximum rainbow antichain that $C$ contains has size $s-1$: $s-2$ sets from $\cC_j$s plus one colored as $[n]$. Finally, any set colored as $[n]$ contains rainbow antichains only from $\cup_{i=1}^{s-1}\cC_i$, so at most $s-1$ sets. This shows $\ar(n,\wedge_s)\ge (s-1)(n-1)+2$. For the upper bound, observe that if a coloring of $2^{[n]}$ uses more than $(s-1)(n-1)+2$ colors, then, by Dilworth's theorem, there is a rainbow antichain of size $s$ that uses colors differing from that of $\emptyset$ and that of $[n]$.

For the lower bound of (2) consider $k-2$ pairwise internally disjoint maximal chains and color all sets in their union by distinct colors and use PURPLE for all other sets. As any antichain can contain at most one set from each chain, the maximum size of a rainbow antichain is $k-1$. This proves $\ar(n,A_k)\ge 3+(k-2)(n-1)$. For the upper bound, observe that any level $\binom{[n]}{j}$ can use at most $k-1$ colors apart from that of $[n]$ (and that of $\emptyset$) as otherwise would have a rainbow $A_k$. Also, any level $\binom{[n]}{j}$ can use at most $k-2$ colors not used in $\binom{[n]}{k}$. Indeed, if $k<j$ and $F_1,F_2,\dots,F_{k-1}$ are sets of size $j$ using colors unused by any set of size $k$, then pick $x_i\notin F_i$ for all $1\le i \le k-1$ and a $k$-set containing all $x_i$s (they may coincide) forms a rainbow $A_k$ with the $F_i$s. If $j<k$, then for such $F_i$s consider $y_i \in F_i$ and a $k$-subset $F$ of $[n]\setminus \{y_1,y_2,\dots,y_{k-1}\}$ (here we use $k\le n-k$). So the total number of colors used is $2+(k-1)+(n-2)(k-2)$ (two for the colors of $[n],\emptyset$, then $k-1$ for the colors used in $\binom{[n]}{k}$, and $k-2$ additional colors for all other $n-2$ levels). This completes the proof.

To see the upper bound of (3), observe that for any $j\le n-2$, by counting pairs $(F,x)$ $x\notin F$, if $\cF\subseteq \binom{[n]}{j}$ contains more than $\frac{sn}{2}$ sets, then there exists a subfamily $\cF'\subseteq \cF$ such that $|\cF'|\ge s+1$ and $|\cup_{F\in \cF'}F|<n$. Let $m$ be the size of the smallest possible $|\cup_{F\in \cF'}F|$ with this property. Using the minimality of $m$, the same reasoning shows that $|\cF'|\le \frac{sm}{2}$. So if $|\cF|>\frac{sn}{2}+k+1$, then there exist $\cF'$ and $\cF''$ with $|\cF'|=s+1,|\cF''|=k+1$ such that $F''\not\subseteq \cup_{F'\in \cF'}F'$ for all $F''\in \cF''$. Let us apply this statement to the family $\cF$ that we obtain by picking a set from each color meeting $\binom{[n]}{j}$. Now, adding $M=\cup_{F'\in \cF'}F'$ and possibly removing the $j$-set in $\cF'\cup \cF''$ with the color of $M$ yields a rainbow copy of $\wedge_s+A_k$. So, if a coloring avoids such a copy, then every level uses at most $\frac{sn}{2}+k+1$ colors. As there are $n-1$ levels and $\emptyset,[n]$, the result follows.
\end{proof}

\noindent \textbf{Acknowledgement}. We would like to thank the anonymous referees for their thorough reading and their remarks that helped the
presentation of the results. Also, we thank Maria Axenovich for letting us know about Jacob Manske’s PhD thesis \cite{M}.

\end{document}